\theoremstyle{plain}
\newtheorem{theorem}{Theorem}[section]
\newtheorem{lemma}[theorem]{Lemma}
\newtheorem{corollary}[theorem]{Corollary}
\theoremstyle{remark}
\newtheorem{definition}[theorem]{\bf Definition}
\newtheorem{remark}[theorem]{Remark}
\numberwithin{equation}{section}
\DeclareMathOperator{\Lip}{Lip}
\DeclareMathOperator*{\conv}{conv}
\DeclareMathOperator*{\conc}{conc}
\newcommand*{\R}{\ensuremath{\mathbb{R}}}
\newcommand*{\Z}{\ensuremath{\mathbb{Z}}}
\newcommand*{\T}{\ensuremath{\mathscr{T}}}
\renewcommand*{\t}{\ensuremath{\mathcal{T}}}
\newcommand*{\Q}{\ensuremath{\mathcal{Q}}}
\renewcommand{\S}{\ensuremath{\mathcal{S}}}
\newcommand*{\I}{\ensuremath{\mathcal{I}}}
\newcommand*{\D}{\ensuremath{\mathcal{D}}}
\newcommand*{\ul}{\ensuremath{u^\mathrm{L}}}
\newcommand*{\ur}{\ensuremath{u^\mathrm{R}}}
\newcommand*{\unu}{\ensuremath{u_\nu}}
\newcommand*{\pnu}{\ensuremath{P_\nu}}
\newcommand*{\qnu}{\ensuremath{Q_\nu}}
\newcommand*{\gm}{\ensuremath{y^{k}_m}}
\newcommand*{\gnum}{\ensuremath{y^{k}_{m,\nu}}}
\newcommand*{\pq}{\ensuremath{\overline{P_\nu Q_\nu}}}
\newcommand*{\gnuone}{\ensuremath{y^{k_1}_{1,\nu}}}
\newcommand*{\muicnu}{\ensuremath{\mu^{IC}_\nu}}
\newcommand*{\muic}{\ensuremath{\mu^{IC}}}
\newcommand*{\li}{\ensuremath{\lambda_i}}
\newcommand*{\ulnu}{\ensuremath{u^L_\nu}}
\newcommand*{\urnu}{\ensuremath{u^R_\nu}}
\renewcommand*{\r}{\ensuremath{\tilde{r}}}
\newcommand*{\f}{\ensuremath{\tilde{f}}}
\begin{document}

\title[Global structure of solutions to PWGN HCLs]{Global structure of admissible BV solutions to piecewise genuinely nonlinear, strictly hyperbolic conservation laws in one space dimension}

\keywords{Hyperbolic conservation laws, Wave-front tracking, Global structure of solution.}

\author{Stefano Bianchini}
\address{SISSA, via Bonomea 265, 34136 Trieste, ITALY}
\email{bianchin@sissa.it}
\urladdr{http://people.sissa.it/{\raise.17ex\hbox{$\scriptstyle\sim$}}bianchin}

\author{Lei Yu}
\address{SISSA, via Bonomea 265, 34136 Trieste, ITALY}
\email{yulei@sissa.it}

%\thanks{}

\begin{abstract}
The paper gives an accurate description of the qualitative structure of an admissible BV solution to a strictly hyperbolic, piecewise genuinely nonlinear system of conservation laws. We prove that there are a countable set $\Theta$ which contains all interaction points and a family of countably many Lipschitz curves $\T$ such that outside $\T\cup \Theta$ $u$ is continuous, and along the curves in $\T$, u has left and right limit except for points in $\Theta$. This extends the corresponding structural result in \cite{BL,Liu1} for admissible solutions.

The proof is based on approximate wave-front tracking solutions and a proper selection of discontinuity curves in the approximate solutions, which converge to curves covering the discontinuities in the exact solution $u$.
\end{abstract}

\maketitle

\section{Introduction}
This paper is concerned with the qualitative structure of admissible solutions to the strictly hyperbolic $N\times N$ system of conservation laws in one space dimension

\begin{equation}
\label{basic equation}
\begin{cases}
u_t+f(u)_x=0 & u:\R^+\times\R\to\Omega\subset\R^N,\ f\in C^2(\Omega,\R), \crcr
u_{|t=0}=u_0 & u_0\in \mathrm{BV}(\R;\Omega).
\end{cases}
\end{equation}

We assume the strict hyperbolicity in $\Omega$: the eigenvalues $\{\lambda_i(u)\}_{i=1}^N$ of the Jacobin matrix $A(u)=Df(u)$ satisfy
\begin{equation*}
\lambda_1(u)<\dots<\lambda_N(u), \qquad u \in\Omega.
\end{equation*}
Furthermore, as we only consider the solutions with small total variation, it is not restrictive to assume that $\Omega$ is bounded and there exist constants $\{\check{\lambda}_j\}^N_{j=0}$, such that
\begin{equation}\label{lambda}
\check{\lambda}_{k-1}<\lambda_k(u)<\check{\lambda}_{k}, \qquad \forall u\in\Omega,\ k=1,\dots, N.
\end{equation}

Let  $\{r_i(u)\}_{i=1}^N$ and $\{l_j(u)\}_{j=1}^N$ be a basis of right and left eigenvectors, depending smoothly on $u$, such that
\begin{equation*}\label{assumponri}
l_j(u) \cdot r_i(u) = \delta_{ij} \text{ and } |r_i(u)| \equiv 1, \quad i,j=1,\dots, N.
\end{equation*}

Let $R_i[u_0](\omega)$ be the value at time $\omega$ of the solution to the Cauchy problem
\[
 \frac{du}{d\omega}=r_i(u(\omega)),\quad u(0)=u_0.
\]
We call the curve $R_i[u_0]$ the \emph{$i$-rarefaction curve} through $u_0$.

We say that the system \eqref{basic equation} is \emph{piecewise genuinely nonlinear} if the set where $\nabla \lambda_i \cdot r_i = 0$ is covered by $\bar k_i$ transversal manifolds: more precisely,
\begin{equation*}
Z_i:= \big\{u \in \Omega:\ |\nabla\lambda_i\cdot r_i(u)|=0 \big\}=\bigcup^{\bar{k}_i}_{j=1} Z_i^j,
\end{equation*}
where $Z_i^j$ is a $N-1$-dimensional manifolds such that
\begin{enumerate}
\item each $Z^j_i$ is transversal to the vector field $r_i(u)$, i.e.
\begin{equation}
 \big{(} \nabla (\nabla\lambda_i\cdot r_i)\cdot r_i\big{)}(u)\ne 0 \quad \text{for $u\in Z_i^j$};
\end{equation}

%$r_i(u)\cdot \mathtt n(u)>0$ on $Z_i^j$, where $\mathtt n$ is the  unit normal vector of $Z_i^j$ with the suitable orientation;
\item each rarefaction curve $R_i[u_0]$ crosses all the $Z^j_i$, and moreover defining the points $\omega^j[u_0]$ by
\[
R_i[u_0](\omega^j[u_0]) \in Z_i^j,
\]
then $j \mapsto \omega^j[u_0]$ is strictly increasing.
\end{enumerate}
This implies that along $R_i$,  $\lambda_i$ has a finite number of critical points.

Denote with $\Delta^j_i$ the set of points $u$ between $Z^j_i$ and $Z^j_{i+1}$:
\[
\Delta^j_i:= \big\{ u\in \Omega: \ \omega^j[u] < 0 < \omega^{j+1}[u] \big\}.
\]
Without any loss of generality, we assume that
\begin{subequations}\label{gnhull}
\begin{equation}
 \nabla \lambda_i\cdot r_i(u)< 0\ \text{   if $j$ is even},\ u \in \Delta^j_i,
\end{equation}
\begin{equation}
 \nabla \lambda_i\cdot r_i(u)> 0\ \text{   if $j$ is odd},\ u \in \Delta^j_i.
\end{equation}
\end{subequations}
From now on, we assume that {\it every characteristic field of \eqref{basic equation} is piecewise genuinely nonlinear}.

It is well known that, because of the nonlinear dependence of the characteristic speeds $\lambda_i(u)$ on the state variable $u$, the solution to \eqref{basic equation} develops discontinuities within finite time, even with smooth initial data. Therefore, in order to construct solutions globally defined in time, one considers weak solutions interpreting the equation (1.1) in a distributional sense. We recall that $u\in C(\R^+; L^1_{loc}(\R;\R^N ))$ is a weak solution to the Cauchy problem \eqref{basic equation} if the initial condition is satisfied and, for any smooth function $\phi\in C^1_c(]0,T[\times \R)$ there holds
\[
\int^T_0 \int _{\R} \phi_t(t, x) u(t, x) + \phi_x (t, x)f(u(t, x))dxdt = 0
\]
As a consequence of the weak formulation, it follows that a function with a single jump discontinuity
\begin{equation*}
u(t, x)=\begin{cases}
         \ul & \text{if}\  x < \sigma t,\\
\ur &\text{if}\  x > \sigma t,
        \end{cases}
\end{equation*}
is a solution to \eqref{basic equation}, if and only if the left and right states $\ul,\ur \in \R^N$, and the speed $\hat \sigma$ satisfy the Rankine-Hugoniot condition
\begin{equation}\label{d:R-H condition}
f(\ur) -f(\ul ) = \hat \sigma(\ur -\ul ).
\end{equation}

By the strict hyperbolicity, it is known that for any $u^-\in \Omega$ there exists $s_0>0$ and N smooth curves $S_i[u^-]:[-s_0,s_0]\rightarrow \Omega$, associated with functions $\hat \sigma_i:[-s_0,s_0]\rightarrow \R$ such that
\begin{equation}\label{d:rh}
\hat \sigma_i(s)[S_i[u^-](s)-u^-]=f(\hat \sigma_i(S_i[u^-](s))-f(u^-)
\end{equation}
and satisfying
\begin{equation*}
 S_i[u^-](0)=u^-,\qquad \hat \sigma_i(0)=\li(u^-), \qquad \frac{d}{ds}S_i[u^-](0) = r_i(u^-).
\end{equation*}
The curve $S_i[u^-]$ the $i$-th \emph{Hugoniot curve} issuing from $u^-$ and we also say that $[u^-,u^+]$ is an \emph{$i$-discontinuity} with speed $\hat \sigma_i(u^-,u^+):=\hat \sigma(s)$ if $ u^+= S_i[u^-](s)$.

Since weak solutions to \eqref{basic equation} may not be unique, an entropy criterion for admissibility is usually added to rule out
nonphysical discontinuities. In \cite{Liu1}, T.P. Liu proposed the following admissibility criterion valid for weak solutions to general systems of conservation laws

We say the $i$-discontinuity $[u^-,u^+]$, $u^+ = S_i[u^-](s)]$, is \emph{Liu admissible} if it satisfies \emph{Liu admissible condition}: for $s_0>0$
\begin{equation*}
 \hat \sigma_i(u^+,u^-)\leq \hat \sigma_i(u,u^-)
\end{equation*}
where $u=S_i[u^-](\tau)$ for each $\tau \in ]0,s_0[$, and for $s_0 < 0$
\[
\hat \sigma_i(u^+,u^-) \geq \hat \sigma_i(u,u^-),
\]
where $u=S_i[u^-](\tau)$ for each $\tau\in]s_0,0[$.

Let  $[u^-,u^+]$, $u^+= S_i[u^-](s)$, be a Liu admissible $i$-discontinuity. Following the notation of \cite{Liu1}, we call the jump $[u^-,u^+]$ \emph{simple} if $\forall \tau\in ]0,s[$ \text{ when } $s>0\ (\forall \tau\in ]s,0[$ \text{ when } $s<0)$,
\begin{equation*}
 \hat \sigma_i(S_i[u^-](\tau),u^-)<\hat \sigma_i(u,u^-)\quad (\hat \sigma_i(S_i[u^-](\tau),u^-)>\hat \sigma_i(u,u^-)).
\end{equation*}
If $[u^-,u^+]$ is not simple, then we call it a \emph{composition} of the waves $[u^-,u_1],[u_1,u_2],\cdots,[u_l,u^+]$, if
\begin{equation}
\label{E_composi_point}
u_k=S_i[u^-](s_k) \quad \text{and} \quad \hat \sigma_i(u^k,u^{k-1})= \hat \sigma_i(u^+,u^-),
\end{equation}
where
\[
0=s_0<s_1<s_2<\cdots<s_l<s \quad (\text{or} \ s<s_l<\cdots<s_1<s_0=0),
\]
%
% there exist $0=s_0<s_1<s_2<\cdots<s_l<s\ (s<s_l<\cdots<s_1<s_0=0)$ such that ,
% \begin{equation*}
%  \hat \sigma_i(u^+,u^-)= \hat \sigma_i(u_1,u^-)=\hat \sigma_i(u_2,u^-)=\cdots=\hat \sigma_i(u_l,u^-)
% \end{equation*}
and there are no other points $\tau$ such that \eqref{E_composi_point} holds.
% \begin{equation*}
% \hat \sigma_i(u_k,u^-)<(>) \hat \sigma_i(u,u^-)  \ \text{ for } u\in S_i[u^-](\tau), \ \forall \tau\in ]s_{k-1},s_k[ \text{ when } s>0\ (\tau\in \forall ]s_k,s_{k-1}[ \text{ when } s<0).
% \end{equation*}

In \cite{Liu1}, under assumption of piecewise  genuinely nonlinearity, it is proved by using Glimm scheme that if the initial data has small total variation, there exists a weak BV solution of \eqref{basic equation} satisfying Liu admissible condition. Therefore, it enjoys the usual regularity properties of BV function: $u$ either is approximately continuous or has an approximate jump at each point $(x,t)\in \R^+\times\R\setminus \mathcal{N}$, where $\mathcal{N}$ is a subset whose one-dimensional Hausdorff measure is zero.

In \cite{Liu1}, the author shows much stronger regularity that $u$ holds. The set $\mathcal{N}$ contains at most countably many points. Moreover, $u$ is continuous (not just approximate continuous) outside $\mathcal{N}$ and countably many Lipschitz continuous curves.

In \cite{BL}, the authors adopt wave-front tracking approximation to prove the similar result for \eqref{basic equation} with the assumption that each characteristic field is genuinely nonlinear. Moreover, the authors where able to prove that outside the countable set $\mathcal N$ there exist right and left limits $u^-$, $u^+$ along the jump curves in the uniform norm, and these limits are stable w.r.t. wavefront approximate solutions: more precisely, for each jump point (not interaction point) of the solution, there exists a jump curve for the approximate solution converging to it and such that its left and right limit converge to $u^-$, and $u^+$ uniformly. In \cite{Bre} (Theorem 10.4), the author generalize his result in \cite{BL} to the case when some characteristic field may be linearly degenerate.

To prove this new regularity estimates one has to overtake additional difficulties, and this is the reason why they have so far been restricted to genuinely nonlinear of linearly degenerate systems: in fact the proof in \cite{Bre} is based on the wave structure of the solution to genuinely nonlinear or linearly degenerate systems, where only one shock curve passes through the discontinuous point (which is not an interaction point) of the admissible solution.

In this paper, we extend the techniques of \cite{Bre} to prove an analogous result about global structure of admissible solution for piecewise genuinely nonlinear system of \eqref{basic equation} by means of wave-front tracking approximation. This not only completes the corresponding result in \cite{Liu1}, but also makes it possible to prove SBV regularity for the solution of piecewise genuinely nonlinear strictly hyperbolic system. In fact, one of the key argument for SBV regularity in the proofs contained in \cite{BCa} and \cite{BY}, is that outside the interaction points the left and right values of jumps are approximated uniformly by wavefront approximate solutions.
% one needs to know how the wave-fronts in approximate solutions convergence to the discontinuous curves in the exact solution to prove SBV regularity for hyperbolic system.

%And we say that the weak solution of \eqref{basic equation} which is discontinuous along the curve $\gamma(t)$ satisfies \emph{R-H condition} if there exist $i\in \{1,\cdots,d\}$,
%\begin{equation*}
% u^+=S_i(u^-)(s_0) \text{ for some } s_0, \qquad \dot{\gamma}(t)=\hat \sigma_i(u^+,u^-).
%\end{equation*}
%where $u^\pm=\lim_{x\rightarrow \gamma(t)\pm 0} u(x,t)$.
\begin{figure}[htbp]
  \hfill
  \begin{minipage}[t]{.45\textwidth}
    \begin{center}
             \begin{picture}(0,0)%
\includegraphics{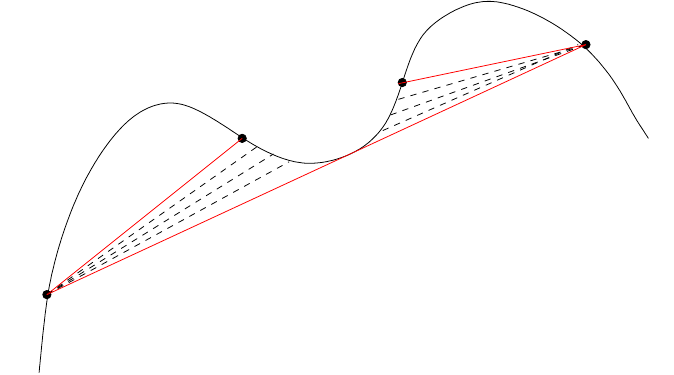}%
\end{picture}%
\setlength{\unitlength}{1973sp}%
\begingroup\makeatletter\ifx\SetFigFont\undefined%
\gdef\SetFigFont#1#2#3#4#5{%
  \reset@font\fontsize{#1}{#2pt}%
  \fontfamily{#3}\fontseries{#4}\fontshape{#5}%
  \selectfont}%
\fi\endgroup%
\begin{picture}(6465,3589)(2401,-4948)
\put(4126,-2011){\makebox(0,0)[lb]{\smash{{\SetFigFont{8}{9.6}{\familydefault}{\mddefault}{\updefault}{\color[rgb]{0,0,0}$f(u)$}%
}}}}
\put(2401,-4186){\makebox(0,0)[lb]{\smash{{\SetFigFont{8}{9.6}{\familydefault}{\mddefault}{\updefault}{\color[rgb]{0,0,0}$u_1$}%
}}}}
\put(4876,-2461){\makebox(0,0)[lb]{\smash{{\SetFigFont{8}{9.6}{\familydefault}{\mddefault}{\updefault}{\color[rgb]{0,0,0}$u_2$}%
}}}}
\put(5851,-2011){\makebox(0,0)[lb]{\smash{{\SetFigFont{8}{9.6}{\familydefault}{\mddefault}{\updefault}{\color[rgb]{0,0,0}$u_3$}%
}}}}
\put(8251,-1561){\makebox(0,0)[lb]{\smash{{\SetFigFont{8}{9.6}{\familydefault}{\mddefault}{\updefault}{\color[rgb]{0,0,0}$u_4$}%
}}}}
\end{picture}%
      \caption{}
      \label{f:tsirw}
    \end{center}
  \end{minipage}
  \hfill
  \begin{minipage}[t]{.45\textwidth}
    \begin{center}
      \begin{picture}(0,0)%
\includegraphics{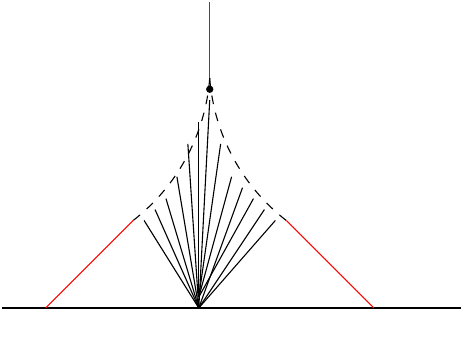}%
\end{picture}%
\setlength{\unitlength}{2763sp}%
\begingroup\makeatletter\ifx\SetFigFont\undefined%
\gdef\SetFigFont#1#2#3#4#5{%
  \reset@font\fontsize{#1}{#2pt}%
  \fontfamily{#3}\fontseries{#4}\fontshape{#5}%
  \selectfont}%
\fi\endgroup%
\begin{picture}(3174,2382)(4864,-4681)
\put(5176,-4636){\makebox(0,0)[lb]{\smash{{\SetFigFont{8}{9.6}{\familydefault}{\mddefault}{\updefault}{\color[rgb]{0,0,0}$x_1$}%
}}}}
\put(6151,-4636){\makebox(0,0)[lb]{\smash{{\SetFigFont{8}{9.6}{\familydefault}{\mddefault}{\updefault}{\color[rgb]{0,0,0}$x_2$}%
}}}}
\put(7276,-4636){\makebox(0,0)[lb]{\smash{{\SetFigFont{8}{9.6}{\familydefault}{\mddefault}{\updefault}{\color[rgb]{0,0,0}$x_3$}%
}}}}
\put(7276,-3811){\makebox(0,0)[lb]{\smash{{\SetFigFont{8}{9.6}{\familydefault}{\mddefault}{\updefault}{\color[rgb]{0,0,0}$u_4$}%
}}}}
\put(5176,-3886){\makebox(0,0)[lb]{\smash{{\SetFigFont{8}{9.6}{\familydefault}{\mddefault}{\updefault}{\color[rgb]{0,0,0}$u_1$}%
}}}}
\put(6451,-2986){\makebox(0,0)[lb]{\smash{{\SetFigFont{8}{9.6}{\familydefault}{\mddefault}{\updefault}{\color[rgb]{0,0,0}$P$}%
}}}}
\put(6676,-4261){\makebox(0,0)[lb]{\smash{{\SetFigFont{8}{9.6}{\familydefault}{\mddefault}{\updefault}{\color[rgb]{0,0,0}$u_3$}%
}}}}
\put(5701,-4261){\makebox(0,0)[lb]{\smash{{\SetFigFont{8}{9.6}{\familydefault}{\mddefault}{\updefault}{\color[rgb]{0,0,0}$u_2$}%
}}}}
\end{picture}%
      \caption{}
     \label{f:tsc}
    \end{center}
  \end{minipage}
  \hfill
\end{figure}

As we said, the assumption of genuinely nonlinearity done in \cite{BL} implies that there is only one shock curve passing through the discontinuous point (which is not an interaction point) of the admissible solution. For piecewise genuinely nonlinear case, however, due the presence of the composite discontinuity, there may be several discontinuity curves passing through the same discontinuity point which is not an interaction point. For example, consider a scalar equation where $f$ has two inflection points (it is thus clearly piecewise genuinely nonlinear), Figure \ref{f:tsirw}, and let $u_0$ be the initial data be
\begin{equation*}
 u_0=\begin{cases}
  u_1 & \mbox{if $x<x_1$},\\
  u_2 & \mbox{if $x_1<x<x_2$},\\
  u_3 & \mbox{if $x_2<x<x_3$,}\\
  u_4 & \mbox{if $x>x_3$.}
 \end{cases}
\end{equation*}

%

% , when $f$ has two inflection points, and one shock interacts with rarefaction waves from right and one shock interact with rarefaction

The same Figure \ref{f:tsc} shows  two shocks connecting value $u_1,u_2$ and $u_3,u_4$ respectively interact with a center rarefaction wave and eventually have the same speed at $P$ (which is thus \emph{not} and interaction point) and combine together becoming a single shock. Clearly such wave pattern can not happen if $f$ is convex or concave.

In this paper we prove the following theorem:
\begin{theorem}\label{t:main theorem}
 Let $u$ be a Liu admissible solution of the Cauchy problem \eqref{basic equation}. Then there exist a countable set $\Theta$ of interaction points and a countable family $\T$ of Lipschitz continuous curves such that $u$ is continuous outside $\Theta$ and ${Graph}(\T)$.

Moreover, suppose $u(t_0,x)$ is discontinuous at $x=x_0$, and $(t_0,x_0)\notin\Theta$. Write $u^L=u(t_0,x_0-),\ u^R=u(t_0,x_0+)$ and suppose that $\ur=S_i[\ul](s)$ with $s>0\ (s<0)$.
\begin{itemize}
 \item If $[u^L,u^R]$ is simple, there exists a Lipschitz curve $y\in \T$, s.t $y(t_0)=x_0$
\begin{equation*}
 \ul=\lim_{\genfrac{}{}{0pt}{}{x<y(t)} {(x,t)\rightarrow(t_0,x_0)}}u(x,t),\ \qquad \ur=\lim_{\genfrac{}{}{0pt}{}{x>y(t)} {(x,t)\rightarrow(t_0,x_0)}}u(x,t)
\end{equation*}
and the curve $y$ propagates with shock speed $\hat \sigma(\ul,\ur)$ at $(t_0,x_0)$, that is
\begin{equation*}
 \dot{y}(t_0)(\ur-\ul)=f(\ur)-f(\ul).
\end{equation*}
and
\begin{equation*}
 \dot{y}(t_0)\leq \hat \sigma_i(S_i[\ul](\tau),\ul),\ \forall \tau\in [0,s] \ (\dot{y}(t_0)\geq \hat \sigma_i(S_i[\ul](\tau),\ul),\ \forall \tau\in [s,0]).
\end{equation*}

\item If $[\ul,\ur]$ is a composition of $[\ul,u_1],\ [u_1,u_2],\cdots,[u_l,\ur]$, then there exists $p$ Lipschitz continuous curves $y_1,\cdots,y_{p}\in \T$, $p\leq l+1$ satisfying
\begin{itemize}
 \item[-] $y_1(t_0)=\cdots=y_{p}(t_0)=x_0$,
 \item[-] $y'_1(t_0)=\cdots=y'_{p}(t_0)$,
 \item[-] $y_1(t)\leq \cdots\leq y_{p}(t)$,for all $t$ in a neighborhood of $t_0$,
\end{itemize}
s.t.
\[
  \ul=\lim_{\genfrac{}{}{0pt}{}{x<y_1(t)} {(x,t)\rightarrow(t_0,x_0)}}u(x,t),\ \qquad \ur=\lim_{\genfrac{}{}{0pt}{}{x>y_{p}(t)} {(x,t)\rightarrow(t_0,x_0)}}u(x,t),
\]
and if in a small neighborhood of $(t_0,x_0)$, $y_j$ and $y_{j+1}$ are not identical, one has
\begin{equation}\label{e:lim inside}
u_j=\lim_{\genfrac{}{}{0pt}{}{y_j(t)<x<y_{j+1}(t)} {(x,t)\rightarrow(t_0,x_0)}}u(x,t).
\end{equation}
Also, these curves propagate with speed $\hat \sigma(\ul,\ur)$ at $(t_0,x_0)$, that is
\begin{equation*}
 \dot{y}_n(t_0)(\ur-\ul)=f(\ur)-f(\ul),\qquad n\in\{1,\cdots,p\}.
\end{equation*}
and the stability conditions hold:
\begin{equation*}
 \dot{y}_n(t_0)\leq \hat \sigma_i(S_i[\ul](\tau),\ul),\ \forall \tau\in [0,s], \ (\dot{y}_n(t_0)\geq \hat \sigma_i(S_i[\ul](\tau),\ul),\ \forall \tau\in [s,0]).
\end{equation*}
\end{itemize}
%The set of nonsimple discontinuous points of $u$ is at most countable.
\end{theorem}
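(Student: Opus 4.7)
The strategy is to realize both $\T$ and $\Theta$ as limits of structures already visible in wave-front tracking (WFT) approximations $u_\nu \to u$ in $L^1_{loc}$, adapting and extending the selection procedure of \cite{BL,Bre} to cope with composite Liu waves. Fix a sequence of $\varepsilon_\nu$-approximate WFT solutions with uniformly bounded total variation. Each $u_\nu$ is a piecewise constant function whose discontinuities are polygonal fronts carrying a family index $i$ and a Liu-admissible Hugoniot strength. At each interaction the strength of outgoing fronts is controlled, via the Glimm interaction potential, by the product of incoming strengths. I use this to concatenate fronts into \emph{maximal shock trajectories}: at an interaction I prolong the trajectory by choosing the outgoing front of the same family $i$ and of the same Liu stratum $\Delta_i^j$ whose strength is closest to the incoming one; I terminate the trajectory only when cancellation removes the front or when its stratum would have to change. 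For each $\varepsilon>0$, the number of trajectories whose strength ever exceeds $\varepsilon$ within a fixed time window is bounded uniformly in $\nu$.

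These trajectories are Lipschitz in $t$ with slope in $[\check\lambda_0,\check\lambda_N]$ by \eqref{lambda}. An Ascoli--Arzel\`a extraction at level $\varepsilon_k = 2^{-k}$ and a diagonalization in $k$ produce a countable family $\T$ of limiting Lipschitz curves. I set $\Theta$ to be the union of the limit interaction points of curves of different families, of the endpoints of limit trajectories, and of the accumulation points of cancellation mass; each of these is countable by the Glimm bound. To prove that $u$ is continuous outside $\mathrm{Graph}(\T)\cup\Theta$, I argue by contradiction: an approximate jump of $u$ at a point outside this set would force the $u_\nu$ to concentrate wave strength of some family $i$ along a narrow space-time tube, and all sufficiently strong fronts are accounted for in $\T$ while rarefaction mass, by piecewise genuine nonlinearity, spreads transversally to $r_i$ and cannot concentrate at a single point.

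Now fix a jump point $(t_0,x_0)\notin\Theta$. In the simple case, admissibility and the fact that $u^R = S_i[u^L](s)$ mean that for $\nu$ large exactly one shock trajectory of family $i$ and of the correct Liu stratum passes $o(1)$-close to $(t_0,x_0)$ with strength approximating $|s|$; its limit is the desired curve $y\in\T$. The uniform left/right limits follow from the $L^1_{loc}$ stability of WFT combined with the BV trace theory: the traces of $u_\nu$ on either side of the trajectory converge to the traces of $u$, and the latter are constant on any cone avoiding other trajectories of $\T$, hence equal to $\ul$ and $\ur$. The Rankine--Hugoniot identity and the Liu inequality pass to the limit directly from the corresponding identities for $u_\nu$, since the discrete shock speed along the approximating trajectory is the Hugoniot speed of the jump it carries.

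The composite case is the main obstacle, and is where \cite{BL} does not apply verbatim. When $[\ul,\ur]$ decomposes as $[\ul,u_1],\dots,[u_l,\ur]$ with the common Liu speed $\hat\sigma$ of \eqref{E_composi_point}, in each $u_\nu$ several family-$i$ fronts with nearly equal speeds accumulate on $(t_0,x_0)$. Using the stratification of $Z_i$ by the transversal manifolds $Z_i^j$ and the sign conventions \eqref{gnhull}, I partition the family-$i$ fronts approaching $(t_0,x_0)$ into at most $l+1$ bundles according to which pair of adjacent strata their underlying Hugoniot arc connects; the Liu condition, combined with the fact that $[\ul,\ur]$ is a Liu-admissible composition, guarantees that distinct bundles cannot be merged without violating the defining inequality for composite jumps. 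Each bundle, after Ascoli--Arzel\`a, collapses to a Lipschitz curve $y_n$; equality of Rankine--Hugoniot speeds inside \eqref{E_composi_point} together with the common limiting slope forces $\dot y_n(t_0)=\hat\sigma$ for all $n$. To establish \eqref{e:lim inside} I have to show that between two adjacent bundles either the bundles coincide in the limit (in which case one is discarded, giving $p\le l+1$) or the strip between them is asymptotically filled, outside a negligible set of spurious fronts whose total strength vanishes with $\varepsilon_\nu$, by states converging uniformly to $u_j$; this is the step I expect to require the most delicate work, and it is handled by iterating the interaction estimate inside the shrinking strip and invoking Liu's structural lemma for composite shocks to rule out nontrivial wave content between bundles. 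The stability inequality for $\dot y_n(t_0)$ is then the limit of the Liu inequality for the Hugoniot arc carried by bundle $n$.
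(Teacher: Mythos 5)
Your overall architecture---decompose the family-$i$ fronts according to the Liu strata, select maximal approximate discontinuity curves in each $u_\nu$, and pass to the limit by Ascoli--Arzel\`a plus diagonalization---is the same as the paper's (its $(\epsilon,i,k)$-approximate subdiscontinuity curves of Section \ref{s:sdc}). But the three steps you assert or defer are exactly where the work lies. First, the uniform bound on the number of trajectories whose strength ever exceeds $\varepsilon$ is not automatic: the paper proves it by showing that any maximal $(\epsilon,i,k)$-curve which both starts and ends at positive times must deposit interaction--cancellation mass $\mu^{IC}_\nu\gtrsim\epsilon^2$ along its life, via quantitative estimates on how much an $(i,k)$-substrength can drop at a single interaction (a drop $\theta$ costs $\I(\alpha^*,\beta^*)\gtrsim\epsilon\theta$ in the transversal case and $\mu^{IC}_\nu\gtrsim\theta$ in the same-family case). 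Without this input the countability of $\T$ is unsupported. Relatedly, your selection rule at interactions (``outgoing front whose strength is closest to the incoming one'') does not obviously yield the non-crossing and ordering properties of the selected curves that the later arguments require; the paper always selects the outgoing subdiscontinuity front with larger speed (the one coming from the left), which is what makes the order $\prec$ on limiting curves well defined.

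Second, and most seriously, you obtain the uniform one-sided limits from ``$L^1_{loc}$ stability of WFT combined with BV trace theory: the traces of $u_\nu$ on either side of the trajectory converge to the traces of $u$.'' That convergence of traces is precisely the statement to be proven (it is Theorem \ref{t:approx_shock_conv}); $L^1_{loc}$ convergence alone does not give it. The paper's Step 1 is a genuine contradiction argument: if $u_\nu$ failed to converge uniformly to $u^L$ on the left of $y_{1,\nu}$, one produces space-like segments $\overline{P_\nu Q_\nu}$ carrying a definite amount of wave strength of some family $j$, and then---splitting into $j>i$, $j<i$, $j=i$---traps that strength between maximal and minimal generalized $j$-characteristics in a region shrinking to $P$, forcing $\mu^{IC}(\{P\})>0$ and contradicting $P\notin\Theta$; the case $j=i$ uses that values along the segment stay near an $i$-rarefaction curve inside a single $\Delta_i^k$, so the characteristics must collide. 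Your appeal to rarefaction mass ``spreading transversally'' gestures at this but supplies no mechanism. Finally, you explicitly defer the proof of \eqref{e:lim inside}, yet it is not an optional refinement: it follows from the same trapping argument applied in the strip between $y_{j,\nu}$ and $y_{j+1,\nu}$, together with the paper's lemma that two tangent subdiscontinuity curves of opposite parity at a non-atom of $\mu^{IC}$ must have distinct speeds (which is what rules out unaccounted-for curves between your bundles and pins down the exact number $p$). As written, the proposal reproduces the paper's scaffolding but leaves its load-bearing estimates unproven.
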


As in \cite{Bre}, the above result is based on this strong convergence result for approximate wave-front solutions.

\begin{theorem}\label{t:approx_shock_conv}
 Consider a sequence of wave-front tracking approximate solutions $u_\nu$ (see Section \ref{s:ft} for definitions) converging to $u$ in $L^1_{loc}$. Suppose $P=(\tau,\xi)$ is a discontinuous point of $u$ and write $u^L=u(\tau,\xi-),\ u^R=u(\tau,\xi+)$. Assume there are only $l$ Lipschitz continuous curves $\T\ni y_{n}:[t^-_{n},t^+_{n}]\mapsto\R,\ n=1,\cdots,l$ passing through the point $P$ and
\[
 y_1(t)\leq\cdots\leq y_l(t)\qquad \text{in a small neighborhood of $\tau$.}
\]
Then up to a subsequence, there exists $y_{n,\nu}:[t^-_{n,\nu},t^+_{n,\nu}]\mapsto\R,\ n=1,\cdots,l$ which are discontinuity curves of $u_\nu$ with uniformly large strengths, where $t^-_{n,\nu}\to t^-_n,t^+_{n,\nu}\to t^+_n$ and
\[
 y_{n,\nu}(t)\rightarrow y_n(t)\qquad \text{for every $t\in[t^-_n,t^+_n]$.}
\]
Moreover, one has
\begin{subequations}
\begin{equation*}
\lim_{r \rightarrow 0+} \limsup_{\nu \rightarrow \infty} \left( \sup_{\genfrac{}{}{0pt}{}{x<y_{1,\nu}(t)}{(x,t)\in B(P,r)}} \big| u_\nu(x,t) - u^L \big| \right) = 0,
\end{equation*}
\begin{equation*}
\lim_{r \rightarrow 0+} \limsup_{\nu \rightarrow \infty} \left( \sup_{\genfrac{}{}{0pt}{}{x>y_{l,\nu}(t)} {(x,t)\in B(P,r)}} \big| u_\nu(x,t) - u^R \big| \right) = 0.
\end{equation*}
\end{subequations}

\end{theorem}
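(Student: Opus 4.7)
My plan is to adapt the wave-tracing strategy of \cite{Bre} to the piecewise genuinely nonlinear setting, where the principal new obstruction is the presence of composite discontinuities at the limit point $P=(\tau,\xi)$. I would first localize on a small rectangle $R_\varepsilon=[\tau-\varepsilon,\tau+\varepsilon]\times[\xi-\varepsilon,\xi+\varepsilon]$ chosen so that $P$ is the only point of $R_\varepsilon$ lying on any of the curves $y_1,\dots,y_l$ other than on the closures of those $l$ branches themselves, and so that no interaction of $u$ with another genuinely large wave occurs inside $R_\varepsilon$. By the hypothesis $(\tau,\xi)\notin\Theta$ and the assumption on $\T$, the jump $[\ul,\ur]$ admits a canonical decomposition $[\ul,u_1],[u_1,u_2],\dots,[u_{l-1},\ur]$ along the $i$-Hugoniot curve of $\ul$, with each sub-jump of strength bounded below by some $\delta_0>0$; this $\delta_0$ will serve as the ``uniformly large'' threshold.

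The next step is front selection. For each $\nu$, by $L^1_{loc}$ convergence and the BV bound on $u_\nu$, for any time $t$ close enough to $\tau$ there exist $i$-fronts of $u_\nu$ crossing the horizontal segment $\{t\}\times[\xi-\varepsilon,\xi+\varepsilon]$ whose left/right traces lie within $\delta_0/4$ of $u_{n-1}$ and $u_n$, respectively. I would then build the curves $y_{n,\nu}$ by the standard wave-tracing procedure of \cite{BL,Bre}: starting from one such front on $\{t=\tau-\varepsilon\}$, at each interaction inside $R_\varepsilon$ follow the outgoing $i$-front of largest (or appropriately prescribed) strength, so as to keep its cumulative strength above $\delta_0/2$ using Glimm-type interaction estimates. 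Because all interactions inside $R_\varepsilon$ involving wave strengths comparable to $\delta_0$ become vanishingly rare (their total strength is bounded by $V(u_\nu)-V(u)+o(1)$), this concatenation is well defined on an interval $[t^-_{n,\nu},t^+_{n,\nu}]\supset[\tau-\varepsilon/2,\tau+\varepsilon/2]$ for $\nu$ large.

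The resulting piecewise linear curves $y_{n,\nu}$ have slopes uniformly bounded by $\max_k |\check\lambda_k|$, so by Ascoli--Arzelà a subsequence converges uniformly to Lipschitz curves $\tilde y_n$, ordered as $\tilde y_1\le\cdots\le\tilde y_l$. To identify $\tilde y_n=y_n$ I would argue by contradiction: if the two were different on a set of positive measure, a portion of $u$ in $R_\varepsilon$ would carry a jump of size $\ge\delta_0/2$ lying off $\bigcup\mathrm{Graph}(y_n)$, contradicting either the $L^1_{loc}$ convergence (testing with continuity of $u$ outside $\T\cup\Theta$) or the structural description provided by Theorem \ref{t:main theorem}. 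The ordering $y_1\le\cdots\le y_l$ and the fact that only these $l$ curves pass through $P$ pin down the correspondence between the labels $n$ on either side.

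Finally, for the uniform convergence estimates I would exploit the Liu admissibility of the front $y_{1,\nu}$: to its left the $i$-th component of $u_\nu$ lies on the $i$-Hugoniot curve issuing from a value close to $\ul$, and any $i$-wave present there would have speed $\ge\dot y_{1,\nu}-o(1)$, forcing it to be absorbed into $y_{1,\nu}$ inside $R_\varepsilon$; together with the vanishing of the total strength of the other characteristic families (by BV control and the continuity of $u$ outside $\T$ on the left side), this yields
\[
\lim_{r\to 0+}\limsup_\nu\sup_{\substack{x<y_{1,\nu}(t)\\(x,t)\in B(P,r)}}|u_\nu(x,t)-\ul|=0,
\]
and symmetrically on the right of $y_{l,\nu}$. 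The principal obstacle I anticipate is Step~2--3, namely the bookkeeping needed to select, through the possibly many small interactions inside $R_\varepsilon$, exactly $l$ distinct $i$-fronts of persistent strength in $u_\nu$ that correspond one-to-one with the composite components $[u_{n-1},u_n]$; in the genuinely nonlinear case of \cite{Bre} one deals with a single front per discontinuity, whereas here the difficulty is to prevent the selected fronts from merging with one another before time $\tau$, which requires a quantitative use of the piecewise genuine nonlinearity together with the separation $\delta_0$ of consecutive composite speeds away from $P$.
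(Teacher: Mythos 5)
Your overall architecture (select persistent fronts in $u_\nu$, trace them through interactions, pass to a uniform limit, then prove the $L^\infty$ estimates by contradiction) matches the paper's strategy, but the proposal leaves unresolved precisely the point where the piecewise genuinely nonlinear case differs from \cite{Bre}, and you say so yourself at the end. The paper does not select fronts by ``largest outgoing $i$-front'': in the PWGN setting a single physical front of $u_\nu$ can be a mixed or composed discontinuity carrying several jump components at once, so that rule cannot keep the $l$ traced curves distinct or decide which component each one follows. The paper's resolution is to decompose every front according to which regions $\Delta_i^k$ (between consecutive manifolds $Z_i^k$) its wave curve $T_i[\cdot]$ traverses, assigning to it $(i,k)$-substrengths; Lemma \ref{l:1subdis} (an interaction creates at most one $(i,k)$-subdiscontinuity front per pair $(i,k)$) and Lemma \ref{l:non-crossing} (order preservation) then make the tracing well defined, and the estimate $\mu^{IC}_\nu(y_\nu)\gtrsim\epsilon^2$ bounds the number of maximal $(\epsilon,i,k)$-approximate subdiscontinuity curves uniformly in $\nu$, which is what produces the $l$ curves $y_{n,\nu}$ with uniformly large (sub)strength. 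Your substitute quantitative input --- ``interactions of size comparable to $\delta_0$ are vanishingly rare because their total strength is bounded by $V(u_\nu)-V(u)+o(1)$'' --- is not a usable estimate; the correct localization is $\mu^{IC}(\{P\})=0$ for $P\notin\Theta$, applied to shrinking regions via the weak convergence $\mu^{IC}_\nu\rightharpoonup\mu^{IC}$.

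The final step is also incorrect as written. To the left of $y_{1,\nu}$ an $i$-wave need not be ``absorbed'': rarefaction fronts there travel slower than the shock and move away from it, and the vanishing of the other families cannot be deduced from ``continuity of $u$ outside $\T$'' without circularity, since that continuity is itself part of what Theorems \ref{t:main theorem}--\ref{t:approx_shock_conv} establish. The paper instead takes a space-like segment $\overline{P_\nu Q_\nu}$ on the left of $y_{1,\nu}$ with $|u_\nu(P_\nu)-u_\nu(Q_\nu)|\geq\epsilon_0$, splits according to which family carries the oscillation, and in each case traps a region bounded by maximal/minimal generalized characteristics (or by an approximate subdiscontinuity curve that would converge to a forbidden $y_0\prec y_1$ through $P$) on which $\mu^{IC}_\nu$ is uniformly positive, contradicting $\mu^{IC}(\{P\})=0$; the case where all crossing fronts are small reduces to values near an $i$-rarefaction curve and is handled as in Lemma \ref{l:int of two subcurves}. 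Without the $(i,k)$-substrength bookkeeping and without this characteristic-trapping argument, the proposal does not close.
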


A brief outline of this paper follows.

 In Section \ref{s:rp}, we recall the definition of construction of Riemann solvers introduced in \cite{BB}.%, which is essential in the construction of the front-tracking approximation.

In Section \ref{s:ft}, we briefly describe the wave-front tracking approximate scheme which originally designed for general strictly hyperbolic system (see \cite{AM1}). In particular, we introduce the definition of interaction and cancelation measures. %which is essential in our proof.

Section \ref{s:sdc} contains the main idea of the paper: the definition of subdiscontinuity curves and $(\epsilon,k)$-approximate subdiscontinuity curves in the approximate wavefront solution. In this section we show that their number is uniformly bounded with respect to approximation parameter.

In Section \ref{s:pf}, we finally give the proof for Theorem \ref{t:main theorem} and Theorem \ref{t:approx_shock_conv}. %which, in fact, shows that the union of the limit of all $(\epsilon,k)$-approximate subdiscontinuity curves covers all the discontinuities of the exact solution.

In Section \ref{s:example}, we construct a strictly hyperbolic $2\times 2$ system of conservation laws, which is not piecewise genuinely nonlinear and its admissible solutions to some initial datum do not have the structural properties described in Theorem \ref{t:main theorem}.

\section{Solution of Riemann problem}\label{s:rp}
As in \cite{B2}, for a fixed point $u^0\in \Omega$ and $i\in\{1,\cdots,N\}$, one can construct smooth vector-value maps $\r_i=\r_j(u,v_i,\sigma_i)$ for $(u,v_i,\sigma_i)\in \R^N\times\R\times\R$ with $\r_i(u,0,\sigma)=r_i(u)$ for all $u,\ \sigma_i$. Setting $l^0_i:=l_i(u^0)$, we can normalize $\r_i$ such that
\begin{equation}\label{a:para_Tk}
\langle l^0_j,\r_i(u,v_i,\sigma_i)\rangle=\begin{cases}
                                       1 & i=j,\\
                                       0 & i\ne j.
                                      \end{cases}
\end{equation}

Writing the speed function $\tilde{\lambda}_i:=l^0_i\cdot Df(u) \r_i(u,v_i,\sigma_i)$, we consider the set, for some $\delta_0,C_0>0$ fixed and $s>0$
\begin{equation*}
 \begin{split}
\Gamma_i(s,u^-):=&\Big{\{}\gamma \in \Lip([0,s],\R^{N+2}),\ \gamma(\xi)=(u(\xi),v_i(\xi),\sigma_i(\xi))\\
&\ u(0)=u^-,|u(\tau)-u^-|=\tau, v_i(0)=0,\\
&\ |v_i|\leq \delta_1, |\sigma_i(\tau)-\lambda_i(u^0)|\leq 2C_0\delta_1\Big{\}}.
\end{split}
\end{equation*}

Given a curve $\gamma\in \Gamma_i$, we define the scalar flux function
\begin{equation}\label{d:scalar_flux}
 \f_i(\tau,\;\gamma)=\int^\tau_0 \tilde{\lambda}_i(u(\xi),v_i(\xi),\sigma_i(\xi))d\xi.
\end{equation}
  Moreover, we define the lower convex envelope of $\f_i$ on $[a,b]\subset [0,s]$ as
\[
\begin{split}
\conv_{[a,b]}\f_i(\tau;\gamma):=\inf \Big{\{} \theta f_i(\tau',\gamma)&+(1+\theta)f_i(\tau'',\gamma);\\ &\theta\in[0,1],\tau',\tau''\in[a,b],\tau=\theta\tau'+(1+\theta)\tau''\Big{\}}.
\end{split}
\]

 Then define a nonlinear operator $\t_{i,s}:\Gamma_i(s,u^-)\to\Gamma_i(s,u^-)$ by setting $\gamma=\check \gamma:=(\check{u},\check{v}_i,\check{\sigma}_i)$,where
\begin{equation}\label{d:element operator}
 \begin{cases}
\check{u}(\tau)=u^-+\int^\tau_0\r_i(u(\xi),v_i(\xi),\sigma(\xi))d\xi,\\
\check{v}_i(\tau)=\tilde{f}_i(\tau;\gamma)-\conv_{[0,s]}\tilde{f}_i(\tau,\gamma),\\
\check{\sigma}_i=\frac{d}{d\tau}\conv_{[0,s]}\f_i(\tau;\gamma).
\end{cases}
\end{equation}

One can show that $\t$ is a contraction in $\Gamma_i(s,u^-)$ with respect to the distance
\begin{equation*}
 D(\gamma,\gamma'):=\delta_1||u-u'||_{L^\infty}+||v_i-v'_i||_{L^1}+||v_i\sigma_i-v'_i\sigma'_i||_{L^1},
\end{equation*}
where
\begin{equation*}
 \gamma=(u,v_i,\sigma_i),\ \gamma'=(u',v'_i,\sigma'_i)\in \Gamma_i(s,u^-).
\end{equation*}

Hence, for any $s$ and $u^-$ in a small neighborhood of $u^0$, $\t_i$ has a unique fixed point, which is a Lipschitz continuous curve
\begin{equation*}
 \bar{\gamma}(\tau)= \big{(}\bar{u}(\tau;u^-,s),\bar{v}_i(\tau;u^-,s),\bar{\sigma}_i(\tau;u^-,s)\big{)},\quad \tau\in[0,s].
\end{equation*}
Then the elementary curve for $i$-th family is defined as
\begin{equation}\label{d:Tk}
 T_i[u^-](s):=\bar{u}(s;u^-,s).
\end{equation}

After adopting the following notations
\begin{equation*}
\sigma_i[u^-](s,\tau):=\bar{\sigma}_i(\tau;u^-,s),
\end{equation*}
\begin{equation}\label{d:scalar function}
\f_i[u^-](s,\tau):=\f_i(\tau;\bar{\gamma}).
\end{equation}
and recalling that the Riemann problem is the Cauchy problem \eqref{basic equation} with piecewise constant  initial data of the form
\begin{equation}\label{e:r}
 u_0(x)=\left\{\begin{array}{ll}
u^\mathrm{L} & \mbox{$x<0$,}\\
u^\mathrm{R} & \mbox{$x>0$,}
\end{array}\right.
\end{equation}
where $\ul,\ \ur$ are two constants. One has the following theorem \cite{BB}.

\begin{theorem}
\label{t:ec}
For every $u\in\Omega$ and  $s>0$ sufficiently small,
\begin{enumerate}
\item N Lipschitz continuous curves $s \mapsto T_i[u](s)\in\Omega,\ i=1,\dots,N$, satisfying $\lim_{s\rightarrow 0}\frac{d}{ds}T_i[u](s)=r_i(u)$,
\item  N Lipschitz continuous functions $(s,\tau) \mapsto \sigma_i[u](s,\tau)$, with $0 \leq \tau \leq s$ and $i=1,\dots,N$, satisfying $\tau \mapsto \sigma_i[u](s,\tau)$ which are increasing and such that $\sigma_i[u](s,0) = \lambda_i(u)$,
\end{enumerate}
with the following properties.

\noindent When $u^\mathrm{L}\in\Omega,\ u^\mathrm{R}=T_i[\ul](s)$, for some $s$ sufficiently small, the unique Liu admissible solution of the Riemann problem \eqref{basic equation}-\eqref{e:r} is defined a.e. by
\begin{equation}\label{d:Riem_sol}
u(x,t) :=
\begin{cases} %\left\{\begin{array}{ll}
u^\mathrm{L} & x/t < \sigma_i[u^\mathrm{L}](s,0), \crcr
T_i[u^\mathrm{L}](\tau) & x/t=\sigma_i[u^\mathrm{L}](s,\tau), \tau \in [0,s], \crcr
u^\mathrm{R} & x/t>\sigma_i[u^\mathrm{L}](s,s).
% \end{array}\right.
\end{cases}
\end{equation}
\end{theorem}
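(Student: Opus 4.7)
The plan is to read the three conclusions off the unique fixed point $\bar\gamma=(\bar u,\bar v_i,\bar\sigma_i)$ of the contraction $\t_{i,s}$ already constructed, and then verify that the self-similar ansatz \eqref{d:Riem_sol} produces the Liu admissible Riemann solution.

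First, for every $u^-$ near $u^0$ and $s>0$ small, the Banach fixed point theorem applied to $\t_{i,s}$ on $(\Gamma_i(s,u^-),D)$ produces a unique $\bar\gamma$, and I define $T_i[u^-](s):=\bar u(s;u^-,s)$ and $\sigma_i[u^-](s,\tau):=\bar\sigma_i(\tau;u^-,s)$. Lipschitz dependence on $(s,u^-,\tau)$ comes from the standard perturbation estimate for contraction fixed points: writing $\bar\gamma,\bar\gamma'$ for the fixed points corresponding to parameters $(s,u^-),(s',(u^-)')$, one has $D(\bar\gamma,\bar\gamma')=D(\t_{i,s}\bar\gamma,\t_{i,s'}\bar\gamma')\le \kappa D(\bar\gamma,\bar\gamma')+C(|s-s'|+|u^--(u^-)'|)$, and absorbing the contraction factor $\kappa<1$ gives the desired bound. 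The tangent at $s=0$ equals $\r_i(u^-,0,\bar\sigma_i(0))=r_i(u^-)$ because $\r_i(u,0,\sigma)\equiv r_i(u)$.

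Second, the monotonicity of $\tau\mapsto\sigma_i[u^-](s,\tau)$ is immediate from the third line of \eqref{d:element operator}, since $\bar\sigma_i=\tfrac{d}{d\tau}\conv_{[0,s]}\f_i(\cdot;\bar\gamma)$ is the derivative of a convex function and therefore nondecreasing. The initial value $\sigma_i[u^-](s,0)=\li(u^-)$ is read from the right derivative of the envelope at the origin: where the envelope agrees with $\f_i$ near $0$, this derivative equals $\tilde\lambda_i(u^-,0,\bar\sigma_i(0))$, which by the normalization \eqref{a:para_Tk} reduces to $\li(u^-)$ (with the appropriate one-sided convention when the envelope departs from $\f_i$ at the origin).

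Third, I verify that \eqref{d:Riem_sol} is a Liu admissible weak solution. On every maximal interval $(\tau_1,\tau_2)\subset[0,s]$ where $\conv_{[0,s]}\f_i=\f_i$, $\bar v_i$ vanishes and $\bar u$ parametrizes a classical $i$-rarefaction with strictly increasing $\sigma_i[u^-](s,\cdot)$; on every maximal interval where the envelope lies strictly below $\f_i$ and is therefore affine with slope $\hat\sigma$, the function $\sigma_i[u^-](s,\cdot)\equiv\hat\sigma$ is constant and the endpoints $T_i[u^-](\tau_1),T_i[u^-](\tau_2)$ satisfy the vector Rankine--Hugoniot relation \eqref{d:rh} with speed $\hat\sigma$. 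Concatenating these pieces yields a self-similar weak solution, and monotonicity of $\sigma_i[u^-](s,\cdot)$ is exactly Liu's E-condition. Uniqueness follows because the lower convex envelope of $\f_i$ on $[0,s]$ is uniquely determined, so any other Liu admissible Riemann solution through the $i$-th family connecting $u^-$ to $u^+=T_i[u^-](s)$ must reproduce the same shock/rarefaction decomposition and the same speeds.

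The main obstacle is upgrading the scalar secant-slope identity $\f_i(\tau_2;\bar\gamma)-\f_i(\tau_1;\bar\gamma)=\hat\sigma(\tau_2-\tau_1)$ to the full vector Rankine--Hugoniot identity \eqref{d:rh} across a flat segment of the envelope. The $i$-th component vanishes by the scalar equality and the definition \eqref{d:scalar_flux}, while the components transverse to $l_i^0$ must be handled through the orthogonality built into the parametrization \eqref{a:para_Tk}, using that $\r_i$ has no leading-order content in directions $j\neq i$. Controlling the residual error is the one nontrivial calculation and requires smallness of $s$ together with smoothness of $\r_i$ in $(v_i,\sigma_i)$.
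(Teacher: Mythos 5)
The paper does not prove this theorem itself: after setting up the fixed point of the operator $\t_{i,s}$ it quotes the statement from \cite{BB}, so the only fair comparison is between your sketch and the argument that reference actually supplies. Your reading of conclusions (1)--(2) off the fixed point is consistent with that construction: Banach's theorem, the standard perturbation estimate for the fixed point (modulo the technicality that $\Gamma_i(s,u^-)$ varies with the parameters, so the two fixed points must be compared after a restriction/rescaling), monotonicity of $\bar\sigma_i$ as the derivative of a convex envelope, and the tangency $\r_i(u,0,\sigma)=r_i(u)$ are all correct and are indeed how the elementary curves are obtained.

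The genuine gap is in the step you yourself flag as ``the one nontrivial calculation'': upgrading the scalar secant identity for $\tilde f_i$ on a flat segment of the convex envelope to the full vector Rankine--Hugoniot relation. The route you propose --- orthogonality from the normalization \eqref{a:para_Tk} plus smallness of $s$ and smoothness of $\r_i$ --- cannot close this. Projecting the Rankine--Hugoniot identity onto $l_j^0$ for $j\neq i$ leaves the quantity $\langle l_j^0, f(u^+)-f(u^-)\rangle-\hat\sigma\langle l_j^0,u^+-u^-\rangle$, and smallness of $s$ only shows this residual is of higher order, not that it vanishes; an approximate jump relation does not produce an exact weak solution, so the concatenated self-similar function would fail to be a distributional solution. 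The mechanism that makes the residual vanish \emph{exactly} in \cite{BB} is that $\r_i(u,v_i,\sigma_i)$ is not an arbitrary smooth extension of $r_i$: it is the direction field of the center manifold of the travelling-wave ODE for the (viscous) system, so that on an interval where $\check\sigma_i\equiv\hat\sigma$ is constant and $\check v_i$ equals the gap between $\tilde f_i$ and its envelope, the fixed-point equations \eqref{d:element operator} reproduce an exact travelling profile with speed $\hat\sigma$; the endpoints of such a profile satisfy \eqref{d:R-H condition} identically. Without invoking this structural input (or, alternatively, Liu's direct construction of admissible composite Hugoniot curves as in \cite{Liu1}), the shock pieces of \eqref{d:Riem_sol} are not justified. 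The same omission affects your uniqueness claim, since identifying an arbitrary Liu admissible self-similar solution with the one built from $T_i$ requires knowing that every admissible $i$-discontinuity arises from a flat part of this particular envelope.
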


\vspace{12pt}
For the case when $s<0$, a right state $\ur=T_i[\ul](s)$ can be constructed in the same way as before, except that one replaces $\conv_{[0,s]} \f_i$ in \eqref{d:element operator} with the upper concave envelope of $\f_i$ on $[s,0]$:
\[
\begin{split}
\conc_{[a,b]}\f_i(\tau;\gamma):=\sup \Big{\{} \theta f_i(\tau',\gamma)&+(1+\theta)f_i(\tau'',\gamma);\\ &\theta\in[0,1],\tau',\tau''\in[a,b],\tau=\theta\tau'+(1+\theta)\tau''\Big{\}},
\end{split}
\]
and looks at the fixed point of of the integral system \eqref{d:element operator} on the interval $[s,0]$.

Because of the assumption \eqref{a:para_Tk} and the definition \eqref{d:Tk}, the elementary curve $T_i[\ul]$ is parameterized by
its $i$-th component relative to the basis $r_1(u^0),\cdots,r_N(u^0)$ i.e.
\begin{equation}\label{e:para_T_k}
s = \langle l_i^0, T_i[\ul](s)-\ul\rangle.
\end{equation}

\begin{remark}
In \cite{BB}, it is proved that if $\ul,\ur\in \Delta^k_i$ with some $k$ even (odd) and $\ur=T_i[\ul](s),\ s>0\ (s<0)$, the solution $u$ of the Riemann problem with the initial date \eqref{e:r} is a \emph{center rarefaction wave}, that is for $t>0$,
\begin{equation*}
 u(x,t)=\begin{cases}
  \ul &\text{if}\ x/t<\lambda_i(\ul),\\
  \ur &\text{if}\ x/t>\lambda_i(\ur),\\
  R_i[\ul](\tau) &\text{if}\ x/t\in[\lambda_i(\ul),\lambda_i(\ur)],\ x/t=\lambda_i(R_i[\ul](\tau)),
 \end{cases}
\end{equation*}
where $\tau\in [0,s]\ (\tau\in [s,0])$ such that $s = \langle l_i^0, R_i[\ul](s)-\ul\rangle$. Notice that $u$ is smooth for $t>0$.
\end{remark}

\begin{remark}\label{r:shocks_rarefaction}
As shown in \cite{BB} (also see Remark 4 in \cite{AM1} and Section 4 of \cite{Liu1}), under the assumption of piecewise genuine nonlinearity, the solution of the Riemann problem provided by \eqref{d:Riem_sol} is a composed wave of the $i$-th family containing a finite number of rarefaction waves and Liu admissible discontinuities. Recalling Theorem \ref{t:ec}, one knows that the regions where the $v_i$-component of the solution to \eqref{d:element operator} vanishes correspond to rarefaction waves, while the regions where the $v_i$-component of the solution to \eqref{d:element operator} is different from zero correspond to admissible discontinuities.

\end{remark}
\vspace{12pt}

%The solution to this problem is the key ingredient for building the wave-front tracking approximate solution: the basic step is the construction of the admissible \emph{elementary curve} of the $i$-th family for any give left state $u^\mathrm{L}$.

The Liu admissible solution \cite{BB} of a Riemann problem for \eqref{basic equation}-\eqref{e:r} is obtained by constructing a Lipschitz continuous map
\begin{equation}\label{RMap}
\mathbf{s}:=(s_1,\dots,s_N)\mapsto T[u^L](\mathbf{s}):=T_N\big[T_{N-1}\big[\cdots\left[T_1[u^\mathrm{L}](s_{1})\right]\cdots\big](s_{N-1})\big](s_N)=u^\mathrm{R},
\end{equation}
which is one to one from a neighborhood of the origin onto a neighborhood of $u^\mathrm{L}$. Then we can uniquely determine intermediate states $u^\mathrm{L}=\omega_0,\ \omega_1,\ \dots,\ \omega_N = u^\mathrm{R}$, and the \emph{wave strength} $s_1,\ s_2,\ \dots,\ s_N$ such that
\begin{equation*}
% \label{e:ec}
\omega_i = T_i[\omega_{i-1}](s_i), \quad i=1,\dots,N,
\end{equation*}
provided that $|u^\mathrm{L}-u^\mathrm{R}|$ is sufficiently small.

By Theorem \ref{t:ec}, each Riemann problem with initial date
\begin{equation}\label{e:erp}
u_0 =
\begin{cases}
\omega_{i-1} & x<0, \\
\omega_i & x>0,
\end{cases}
\end{equation}
admits a self-similar solution $u_i$, containing only $i$-waves. %If the characteristic fields are either genuinely nonlinear or linearly degenerate, $u_i$ only contains either one one shock, one rarefaction wave or one contact discontinuity. However, for the piecewise genuinely nonlinear system, as shown in \cite{Liu1}, $u_i$ may consist of a composition of finite  rarefaction, shocks and contact discontinuities of the $i$-th family: suppose $\ur=T_i[\ul](s)$ with $\ s>0$, there may exist states  $\ul=u_0,u_1,\cdots,u_m=\ur$ and $s_1,\cdots,s_m$ such that
%\begin{itemize}
%\item  $s_j>0,\ \forall 1\leq j\leq m$ and $s=s_1+\cdots+s_m$.
%\item  $[u_{j-1}, u_{j}]$  are connected by an $i$-discontinuity if i odd, $[u_{j-1}, u_{j}]$  are connected by an $i$-rarefaction %wave if $j$ even.
%\end{itemize}
We call $u_i$ the $i$-th \emph{elementary composite wave} or simply $i$-$wave$. Therefore, under the strict hyperbolicity assumption, the solution of the Riemann problem with the initial data \eqref{e:r} is obtained by piecing together the self-similar solutions of the Riemann problems given by \eqref{basic equation}-\eqref{e:erp}.

Indeed, from the strict hyperbolicity assumption \eqref{lambda}, the speed of each elementary $i$-th wave in the solution $u_i$ is inside the interval $[\check{\lambda}_{i-1},\check{\lambda}_{i}]$ if $s \ll 1$, so that the solution of the general Riemann problem \eqref{basic equation}-\eqref{e:r} is then given by
\begin{equation}
\label{e:riemann solution}
u(x,t) =
\begin{cases}
         u^\mathrm{L} & x/t <\check{\lambda}_{0},\\
         u_i(x,t) & \check{\lambda}_{i-1}<x/t<\check{\lambda}_{i}, i=1,\dots,N,\\
         u^\mathrm{R} & x/t>\check{\lambda}_{N}.
         \end{cases}
\end{equation}

% u(x,t)=\begin{cases}
%         R^i[u_{i-1}](\tau) &x/t=\lambda_i( R^i[u_{i-1}](\tau))\\
%         adfa&aa
%        \end{cases}
%\]

\section{Description of wave-front tracking approximation}\label{s:ft}
In \cite{AM1}, the authors provide an algorithm of wave-front tracking approximation for vanishing viscosity BV solutions to the strictly hyperbolic system which is much more general than the case discussed here. We modify a little about its algorithm in order to simplify our analysis. Due to Theorem \ref{t:ec}, one knows that the solution constructed by such approximation is Liu admissible.

Wave-front tracking approximation is an algorithm which produces  piecewise constant approximate solutions to the Cauchy problem \eqref{basic equation}. In order to construct approximate wave-front tracking solutions, given a fixed $\epsilon>0$, we first choose a piecewise constant function $u^\epsilon_0$ which is a good approximation to initial data $u_0$ such that
\begin{equation}\label{initial approx}
\mathrm{Tot.Var.}\{u^\epsilon_0\}\leq \mathrm{Tot.Var.}\{u_0\}, \quad ||u^\epsilon_0-u_0||_{L^1}<\epsilon,
\end{equation}
and $u^\epsilon_0$ only has finite jumps.  Let $x_1<\dots<x_m$ be the jump points of $u^\epsilon_0$. For each $\alpha=1,\dots,m$, we approximately solve the Riemann problem (just shifting the center from $(0,0)$ to $(0,x_\alpha)$) with the initial data of the jump $[u^\epsilon_0(x_\alpha-),[u^\epsilon_0(x_\alpha+)]$ by a function $w(x,t)=\phi(\frac{x-x_0}{t})$ where $\phi$ is a piecewise constant function. The straight lines where the discontinuities locate are called $wave$-$fronts$ (or just \emph{fronts} for short). The wave-fronts can prolong until they interact with other fronts, then at the interaction point, the corresponding Riemann problem is approximately solved and several new fronts are generated forward. Then one tracks the wave-fronts until they interact with other wave-fronts, etc... In order to avoid the algorithm to produce infinite many wave-fronts in finite time, different kinds of approximate Riemann solvers should be introduced.

\subsubsection{The approximate $i$-th elementary wave}
\label{Ss_k_gnl}

Suppose that $u_i$ is an $i$-th elementary composite wave which is obtain by solving Riemann problem with initial data \eqref{e:erp} where $\omega_i=T_i[\omega_{i-1}](s_i)$. For notational convenience, we write $\sigma_i(\tau):=\sigma_i[\omega_{i-1}](s_i,\tau)$. Let
\begin{equation*}
 p:=\left[\frac{\sigma_i(s_i)-\sigma_i(0)}{\epsilon}\right]+1
\end{equation*}
and
\begin{equation*}
 \vartheta_l:=\sigma_i(0)+\frac{l}{p}\left[\sigma_i(s_i)-\sigma_i(0)\right],\qquad l=0.\cdots,p-1.
\end{equation*}
We set
\begin{equation*}
 \omega_{i-1,l}=T_i[\omega_{i-1}](s_{i,l}),
\end{equation*}
where
\begin{equation*}
 s_{i,l}:=\begin{cases}
           \text{min}\big{\{}s\in [0,s_i], \sigma_i(s)=\vartheta_l \big{\}},\quad s_i\geq 0,\\
           \text{max}\big{\{}s\in [s_i,0], \sigma_i(s)=\vartheta_l \big{\}},\quad s_i\leq 0.
          \end{cases}
\end{equation*}

Then the $i$-th elementary composite wave $u_i$ is approximated by $\tilde{u}_i$ as the following,
\begin{equation}\label{e:aew}
   \tilde{u}_i(x,t)=\begin{cases}
      \omega_{i-1}\quad &x/t<\vartheta_{i,0},\\
      \omega_{i-1,l},\quad &\vartheta_{i-1,l-1}<x/t<\vartheta_{i,l},\quad (l=1,\cdots,p-1),\\
      \omega_{i}\quad &x/t>\vartheta_{i.p-1}.
     \end{cases}
\end{equation}
 Notice that $\tilde{u}_i$ consists of p fronts with small strength.

\subsubsection{Approximate Riemann solver}
Suppose at the point $(t_1,x_1)$, a wave-front $[u^L,u^M]$ of strength $s'$ belonging to $i'$-th family interacts from the left with a wave-front $[u^M,u^R]$ of strength $s''$ belonging to $i''$-th family for some $i',\ i''\in \{1,\cdots,N\}$ such that
\[
u^M=T_{i'}[u^\mathrm{L}](s'),\qquad u^\mathrm{R}=T_{i''}[u^M](s'').
\]
 Assume that $|u^\mathrm{L}-u^\mathrm{R}|$ sufficiently small. Then at the interaction point, the Riemann problem with the initial data of the jump  $[u^\mathrm{L},u^\mathrm{R}]$ may be solved by two kinds approximate Riemann solver according to different situation.

\begin{itemize}
 \item\emph{Accurate Riemann solver}: It replaces each elementary composite wave of the exact Riemann solution (refers to $u_i$ in  the solution \eqref{e:riemann solution}) with an approximate $i$-th elementary wave defined by \eqref{e:aew}.

\vspace{6pt}

 \item\emph{Simplified Riemann solver}: It only generates the approximate elementary waves belong to $i'$-th and $i''$-th families with the corresponding strength $s'$ and $s''$ as the incoming ones if $i'\ne i''$ or the approximate  $i'$-th elementary waves of strength $s'+s''$ if $i'= i''$. The simplified Riemann solver collects the remaining new waves into a single \emph{nonphysical front}, traveling with a constant speed $\hat{\lambda}$, strictly larger than all characteristic speeds. Therefore, usually the simplified Riemann solver generates less outgoing fronts after an interaction than the accurate Riemann solver.
\end{itemize}

Since the simplified Riemann solver produces nonphysical wave-fronts and they can not interact with each other, one needs an approximate Riemann solver defined for the  interaction between, for example, a physical front of the $i$-th family with strength $s$, connecting $u^M$, $u^\mathrm{R}$ and a nonphysical front (coming from the left)  connecting the left value $u^\mathrm{L}$ and $u^M$ traveling with speed $\hat{\lambda}$.
\begin{itemize}
\item\emph{Crude Riemann solver}: It generates an approximate $i$-th elementary wave connecting $u^\mathrm{L}$ and $\tilde{u}^M=T_i[u^\mathrm{L}](s)$ and a nonphysical wave-front joining $\tilde{u}^M$ and $u^\mathrm{R}$, traveling with speed $\hat{\lambda}$.  In the following, for simplicity, we just say that the non-physical fronts belong to the $(N+1)$-th characteristic field.
\end{itemize}

\begin{remark}
It is not restrictive to assume that at each time $t>0$, at most one interaction takes place, involving exactly two incoming fronts, because one can always slightly change the speeds of the incoming fronts if more than two fronts meet at the same point. It is sufficient to require that the error vanishes when the approximation solutions converge to the exact solution. Actually, suppose x=y(t) is a front in an approximate solution $u$ with parameter $\epsilon$ and $u^L=u(t,x-)$ and $u^R=u(t,x+)$ such that
\begin{equation}\label{R_H}
 u^R=T_i[u^L](s)
\end{equation}
for some index $i\in \{1,\cdots,N\}$ and wave strength $s$. Then the following holds
\begin{equation}\label{front speed error}
 \left|\dot{y}(t)-\sigma_i[u^L](s,\tau)\right|\leq 2\epsilon, \ \forall \tau\in [0,s].
\end{equation}
\end{remark}

\begin{remark}
 There are three kinds of physical wave-fronts. Suppose $[\ul,\ur]$ is wave front in an approximate solution, and $\ur=T_i[\ul](s)\ (s>0)$. Recalling the notaton \eqref{d:scalar function} and Remark \ref{r:shocks_rarefaction}, and writing $\tilde{f}_i(\tau):=\tilde{f}_i[\ul](s,\tau)$, one knows that $[\ul,\ur]$ may be one of the following three kinds of fronts:
\begin{itemize}
 \item \emph{Discontinuity front} if $\tilde{f}_i(\tau)>\conv_{]0,s[} \tilde{f}_i(\tau),\ \forall \tau\in]0,s[\setminus\mathcal{S}$, where $\mathcal{S}$ is set of finite cardinality.
 \item \emph{Rarefaction front} if $\tilde{f}_i(\tau)=\conv_{]0,s[} \tilde{f}_i(\tau),\ \forall \tau\in]0,s[$. In this case, $\ul,\ur \in \Delta^k_i$ for some $k$ even and $\ur=R_i[\ul](s)$.
 \item \emph{Mixed front} if there exist $]a,b[\subsetneq ]0,s[$ such that $\tilde{f}_i(\tau)>\conv_{]0,s[} \tilde{f}_i(\tau),\ \forall \tau\in]a,b[$ and $]c,d[\subsetneq ]0,s[$ such that $\tilde{f}_i(\tau)=\conv_{]0,s[} \tilde{f}_i(\tau),\ \forall \tau\in]c,d[$.
\end{itemize}

\end{remark}

\subsubsection{Interaction potential and BV estimates}
\label{interaction amount}

In order to check the total variations of approximate solutions are uniformly bounded with respect to time, one needs the estimate on the difference between the strength of the incoming waves and the strength of the outgoing waves produced by an interaction. Suppose two wave-fronts with strength $s'$ and $s''$ interact and $\f'_i,\ \f''_i$ are the corresponding scalar flux function defined by \eqref{d:scalar_flux}. We define the \emph{amount of interaction $\mathcal{I}(s',s'')$} between $s'$ and $s''$.

When $s'$ and $s''$ belong to different characteristic families, set
\begin{equation}\label{d:interation amount ijwaves}
\mathcal{I}(s',s'')=|s's''|.
\end{equation}

When $s',\ s''$ belong to the same family,
\begin{itemize}
 \item[(a)] If $s''>0$, we set
\begin{equation*}
\begin{split}
 \I(s',s''):=\int^{s'}_0&\big{|}\conv_{[0,s']}\f'_i(\xi)-\conv_{[0,s'+s'']}(\f'_i\cup\f''_i)(\xi)\big{|}d\xi\\
&+\int^{s'+s''}_{s'}\big{|}\conv_{[0,s'']}(\f'(s')\f''_i(\xi-s'))-\conv_{[0,s'+s'']}(\f'_i\cup\f''_i)(\xi)\big{|}d\xi,
\end{split}
\end{equation*}

\item[(b)] if $-s'\leq s''<0$, we set
\begin{equation*}
\begin{split}
 \I(s',s''):=\int^{s'+s''}_0&\big{|}\conv_{[0,s']}\f'_i(\xi)-\conv_{[0,s'+s'']}\f'_i(\xi)\big{|}d\xi\\
&+\int^{s'}_{s'+s''}\big{|}\conv_{[0,s']}\f'(\xi)-\conc_{[s'+s'',s']}\f'_i(\xi)\big{|}d\xi,
\end{split}
\end{equation*}
\item[(c)] if $s''<-s'$, we set
\begin{equation*}
\begin{split}
 \I(s',s''):=\int^{-s'}_{s''}&\big{|}\conc_{[s'',0]}\f''_i(\xi)-\conc_{[s'',-s']}\f''_i(\xi)\big{|}d\xi\\
&+\int^{0}_{-s'}\big{|}\conv_{[s'',0]}(\f'(\xi))-\conv_{[-s',0]}\f''_i(\xi)\big{|}d\xi,
\end{split}
\end{equation*}
\end{itemize}

Throughout the paper, we write $A\lesssim B\ (A\gtrsim B)$ if there exists a constant $C>0$ which only depends on the system \eqref{basic equation} such that $A\leq CB\ (A\geq CB)$.

Recall the Lipschitz continuous map $T$ defined in \eqref{RMap} and suppose $u^M=T_i[u^L](s_1),u^R=T_j[u^M](s_2)$ and $u^R=T[u^L](\mathbf{s})$. By Glimm's interaction estimates proved in \cite{B2} (also see Lemma 1 in \cite{AM1}), one has
\begin{equation}\label{gie}
 |\mathbf{s}-\mathbf{s}_1-\mathbf{s}_2|\lesssim  \I(s_1,s_2)
\end{equation}
where $\mathbf{s}_h=(s_1,s_2,\cdots,s_N)$, $h=1,2$ with $s_n=0$ for $n\ne h$.

At each time $t>0$ when no interaction occurs, and the approximate solution $u$ has jumps at $x_1,\dots,x_m$, we denote by
\[
\omega_1,\dots,\omega_m, \quad s_1,\dots,s_m, \quad i_1,\dots,i_m,
\]
their left states, signed strengths and characteristic families respectively: the sign of $s_\alpha$ is given by the respective orientation of $dT_i[u](s)/ds$ and $r_i$, if the jump at $x_\alpha$ belongs to the $i$-th family. The total variation of $u_\nu$ will be computed as
\[
V(t) := \sum_{\alpha} \big| s_\alpha \big|.
\]

Since $T_i[u^0]$ is a Lipschitz continuous function and the Lipschitz constant is uniformly bounded for any $u^0\in \Omega$, one has
\begin{equation}\label{e:TV_V}
{\rm Tot.Var}\{u(\cdot,t)\}\lesssim V(t).
\end{equation}
Then the estimate of increasing of ${\rm Tot.Var}\{u_\nu(\cdot,t)\}$ turns out to be the estimate of the total amount of interaction. Following \cite{B2}, we define the \emph{Glimm wave interaction potential} as follows:
\begin{equation*}\label{d:gp}
\begin{split}
\Q(t) &:= \sum_{\genfrac{}{}{0pt}{}{i_\alpha>i_\beta}{x_\alpha<x_\beta}} \big| s_\alpha s_\beta \big| + \frac{1}{4} \sum_{i_\alpha=i_\beta<N+1} \int^{|s_\alpha|}_0\int^{|s_\beta|}_0 \big| \sigma_{i_\beta}[\omega_\beta](s_\beta,\tau'')-\sigma_{i_\alpha}[\omega_\alpha](s_\alpha,\tau') \big| d\tau'd\tau''.
\end{split}
\end{equation*}

Denoting the time jumps of the total variation and the Glimm potential as
\[
\Delta V(\tau)=V(\tau+)-V(\tau-),\ \ \Delta \Q(\tau)=\Q(\tau+)-\Q(\tau-),
\]
the fundamental estimates are the following (Lemma 5 in \cite{AM1}): in fact, when two wave-fronts with strength $s',\ s''$ interact,
\begin{subequations}
\label{e:gpe_ve}
\begin{equation}
\label{e:gpe}
\Delta\Q(\tau)\lesssim\mathcal{I}(s',s''),
\end{equation}
\begin{equation}
\label{e:ve}
\Delta V(\tau)\lesssim \mathcal{I}(s',s'').
\end{equation}
\end{subequations}
Thus one defines the \emph{Glimm functional}
\begin{equation*}
\label{e:glimm_funct}
\Upsilon(t) := V(t) + C_0 \Q(t)
\end{equation*}
with $C_0$ suitable constant, so that $\Upsilon$ decreases at any interaction. Using this functional, one can prove that their total variations are uniformly bounded. Moreover, one can also show that the number of wave-fronts remains finite for all time (see section 6.1 of \cite{AM1}). This makes sense for the construction of approximate wave-front tracking solutions.

\subsubsection{Construction of the approximate solutions and their convergence to exact solution}

The construction starts at initial time $t=0$ with a given $\epsilon>0$, by taking  $u_{0,\epsilon}$ as a suitable piecewise constant approximation of initial data $u_0$, satisfying \eqref{initial approx}. At the jump points of $u_{0,\epsilon}$, we locally solve the Riemann problem by accurate Riemann solver. The approximate solution then can be prolonged until a first time $t_1$ when two wave-fronts interact. Again we solve the Riemann problem at the interaction point by an approximate Riemann solver. Whenever the amount of interaction (see Section \ref{interaction amount} for the definition) of the incoming waves is larger than some threshold parameter $\rho = \rho(\epsilon) > 0$, we shall adopt the accurate Riemann solver. Instead, in the case where the amount of interaction of the incoming waves is less than $\rho$, we shall adopt the simplified Riemann solvers. The threshold $\rho$ is suitably chosen so that the number of wave-fronts remains finite for all times. And we will apply the
crude Riemann solver if one of the incoming wave-front is non-physical front. One can show that the number of wave fronts is uniformly bounded  (see Section 6.2 in \cite{AM1}).

We call such approximate solutions \emph{$\epsilon$-approximate front tracking solutions}. At each time $t$ when there is no interaction, the restriction $u_\epsilon(t)$ is a step function whose jumps are located along straight lines in the $(x,t)$-plane. %The wave-fronts generated by the Riemann solver approximating the $k$-th elementary waves are called \emph{$k$-fronts} or $k$-waves.

Let $\{\epsilon_\nu\}^\infty_{\nu=1}$ be a sequence of positive real numbers converging to zero. Consider a corresponding sequence of $\epsilon_\nu$-approximate front tracking solutions $u_\nu:=u_{\epsilon_\nu}$ of \eqref{basic equation}: it is standard to show that the functions $t\mapsto u_\nu(t,\cdot)$ are uniformly Lipschitz continuous in $L^1$ norm. since \eqref{e:TV_V} and \eqref{e:gpe_ve} hold independent of the parameter $\epsilon_\nu$, $u_\nu(t,\cdot)$ have uniformly bounded total variation. Therefore by Helly's theorem, $u_\nu$ converges up to a subsequence in $\mathbb{L}^1_{\mathrm{loc}}(R^+\times\R)$ to some function $u$, which is a weak solution of \eqref{basic equation}.

It can be shown that by the choice of the Riemann solver in Theorem \ref{t:ec}, the solution obtained by the front tracking approximation coincides with the unique vanishing viscosity solution \cite{BB}. Furthermore, there exists a closed domain $\D\subset L^1(\R,\Omega)$ and a unique distributional solution $u$, which is a Lipschitz semigroup $\D\times[0,+\infty[\rightarrow \D$ and which for piecewise constant initial data coincides, for a small time, with the solution of the Cauchy problem obtained piecing together the standard entropy solutions of the Riemann problems. Moreover, it lives in the space of BV functions.

For simplicity, the pointwise value of $u$ is its $L^1$ representative such that the restriction map $t\mapsto u(t)$ is continuous form the right in $L^1$ and $x \mapsto u(x,t)$ is right continuous from the right.

\subsubsection{Further estimates}

% Let $\epsilon^\nu \to 0$, $\nu \in \N$, and denote by $u^\nu$ the approximate wave-front tracking solution constructed with parameter $\epsilon^\nu$.
%
To each $u_\nu$, we define the \emph{measure $\mu^\mathrm{I}_\nu$ of interaction} and the \emph{measure $\mu^\mathrm{IC}_\nu$ of interaction and cancelation} concentrated on the set of interaction points as follows. If two physical fronts belonging to the families $i',i''\in\{1,\dots,N\}$ with strength $s',\ s''$ interact at point $P$, we denote

\begin{subequations}\label{ICmeas}
 \begin{equation}\label{imeas}
\mu^\mathrm{I}_\nu(\{P\}):=\mathcal{I}(s',s''),
\end{equation}
\begin{equation}\label{icmeas}
\mu^\mathrm{IC}_\nu(\{P\}) \;:=\mathcal{I}(s',s'')+\;
\left\{\begin{array}{ll}
|s'|+|s''|-|s'+s''| & \mbox{$i'=i''$},\\
0 & \mbox{$i'\neq i''$}.
\end{array}\right.
\end{equation}
\end{subequations}

The wave strength estimates \eqref{gie} yields balance principles for the wave strength of approximate solutions.
More precisely, given a polygonal region $\Gamma$ with edges transversal to the waves it encounters. Denote by $W^{i\pm}_{\nu,\mathrm{in}}$, $W^{i\pm}_{\nu,\mathrm{out}}$ the positive $(+)$ or negative $(-)$ $i$-waves in $u^\nu$ entering or exiting $\Gamma$, and let $W^i_{\nu,\mathrm{in}}=W^{i+}_{\nu,\mathrm{in}}-W^{i-}_{\nu,\mathrm{in}}$, $W^i_{\nu,\mathrm{out}}=W^{i+}_{\nu,\mathrm{out}}-W^{i-}_{\nu,\mathrm{out}}$. Then the measure of interaction and the measure of interaction-cancelation control the difference between the amount of exiting $i$-waves and the amount of entering $i$-waves w.r.t. the region as follows:
\begin{subequations}
\label{e:bl_bl_1}
\begin{equation*}
\label{e:bl}
|W^i_{\nu,\mathrm{out}}-W^i_{\nu,\mathrm{in}}|\lesssim \mu^\mathrm{I}_\nu(\Gamma),
\end{equation*}
\begin{equation*}
\label{e:bl_1}
|W^{i\pm}_{\nu,\mathrm{out}}-W^{i\pm}_{\nu,\mathrm{in}}|\lesssim \mu^\mathrm{IC}_\nu(\Gamma).
\end{equation*}
\end{subequations}
The above estimates are fairly easy consequences of the interaction estimates \eqref{e:gpe_ve} and the definition of $\mu^\mathrm{I}_\nu$, $\mu^\mathrm{IC}_\nu$. On the other hand, the uniform boundedness of Tot.Var.$\{u(\cdot,t)\}$ w.r.t. time $t$ and parameter $\nu$ implies that $\mu^I_\nu$ and $\mu^{IC}-\nu$ are bounded measures for all $\nu$

By taking a subsequence and using the weak compactness of bounded measures, there exist bounded measures $\mu^{I}$ and $\mu^\mathrm{IC}$ on $\R^+\times\R$ such that the following weak convergence holds:
\begin{equation*}\label{def of muic}
\mu^\mathrm{I}_{\nu}\rightharpoonup\mu^\mathrm{I}, \quad \mu^\mathrm{IC}_\nu\rightharpoonup\mu^\mathrm{IC}.
\end{equation*}

\section{Construction of subdiscontinuity curves}\label{s:sdc}
       Suppose $[\ul,\ur]$ with $\ur=T_i[\ul](s)$ is a wave front of $i$-th family in the approximate solution $u_\nu$, and the wave curve $\tau\mapsto T_i[\ul](\tau)$ (see Theorem \ref{t:ec}) intersects $Z^j_i, \cdots, Z_i^{j+p}$ at $u_j,\cdots,u_{j+p}$ for $0\leq \tau \leq s$, such that
\[
 u_j=T_i[\ul](s_j).
\]
Then we say that the wave front $[u^L,u^R]$ has $(i,k)$-substrength $s_i^k:=s_{k+1}-s_k$ and we decompose the front into $(i,k)$-subdiscontinuity fronts with strength $s_i^k$, where $k\in\{j,\cdots,j+p-1\}\cap2\Z$ when $s>0$, or $k\in\{j,\cdots,j+p-1\}\cap (2\Z+1)$ when $s<0$. The points $u_k$ and $u_{k+1}$ are connected by the part of the curve $T_i[u^L](\cdot)$ inside $\Delta^k_i$.

 We denote the family of all $(i,k)$-subdiscontinuity fronts as $\S_i^k$.

It is obviously that only mixed fronts and discontinuity fronts can have $(i,k)$-substrength $s_i^k>0$ for some $k$, which means they can be decomposed into subdiscontinuity fronts.

\begin{lemma}\label{l:1subdis}
In a wave-front tracking approximation solution, an interaction can only generate at most one subdiscontinuity front with strength $s_i^k$ for some $i,k$.
\end{lemma}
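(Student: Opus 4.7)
The plan is to show that for each fixed pair $(i,k)$, the outgoing waves of any single interaction contain at most one front with nonzero $(i,k)$-substrength. I would split the argument by the type of approximate Riemann solver used at the interaction point, and isolate the accurate Riemann solver as the only nontrivial case.

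For the simplified and crude Riemann solvers, the outgoing $i$-wave is a single physical front $[u^L,T_i[u^L](s)]$. Since the wave curve $\tau\mapsto T_i[u^L](\tau)$ is Lipschitz with tangent close to $r_i$ at $\tau=0$ and each $Z_i^j$ is transversal to $r_i$, for $s$ small (guaranteed by the uniform BV bound and the Glimm functional) the curve crosses each $Z_i^j$ at most once, so at most one $(i,k)$-substrength is nonzero along this single front. For the accurate Riemann solver, the outgoing $i$-wave is the approximate elementary wave \eqref{e:aew}, consisting of $p$ discrete fronts $[\omega_{i-1,l-1},\omega_{i-1,l}]$ obtained by sampling $T_i[u^M](\cdot)$ at the parameters $s_{i,l}$ associated to the equispaced speed levels $\vartheta_l$. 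By the same transversality argument, the wave curve passes through $\Delta_i^k$ along a single parameter interval $[s_k,s_{k+1}]$.

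Now I would use the sign condition \eqref{gnhull}: on $\Delta_i^k$ with $k$ of the parity relevant for a subdiscontinuity (even when $s_i>0$, odd when $s_i<0$), $\nabla\lambda_i\cdot r_i$ has the sign making $\tilde f_i$ concave (respectively convex) on $[s_k,s_{k+1}]$. Consequently the convex (resp.\ concave) envelope used in \eqref{d:element operator} coincides with a single chord over an interval containing $[s_k,s_{k+1}]$, and the Liu-admissible speed $\sigma_i(\tau)=\frac{d}{d\tau}\operatorname{conv}_{[0,s_i]}\tilde f_i(\tau)$ is \emph{constant} on this chord. Because the discretization \eqref{e:aew} samples by equispaced \emph{speeds}, a constant-speed interval of $\sigma_i$ absorbs at most one grid value $\vartheta_{l_0}$, so the entire interval $[s_k,s_{k+1}]$ is contained in a single subinterval $[s_{i,l_0-1},s_{i,l_0}]$ of the partition. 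Hence exactly one outgoing discrete front carries the traversal of $\Delta_i^k$, which is the unique $(i,k)$-subdiscontinuity generated by the interaction.

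The main obstacle I anticipate is verifying cleanly that the chord of the convex envelope on the relevant shock piece of the Riemann fan covers (rather than merely overlaps) the full $Z_i^k\to Z_i^{k+1}$ wave-curve interval $[s_k,s_{k+1}]$, so that the constant-speed argument rules out two adjacent outgoing fronts both having positive $(i,k)$-substrength. This uses piecewise genuine nonlinearity together with the fact that $\tilde f_i$ is strictly concave (resp.\ convex) on $\Delta_i^k$, forcing the endpoints of the convex-envelope chord to lie outside $(s_k,s_{k+1})$; together with continuity of $\sigma_i$ this makes the ``at most one $\vartheta_{l_0}$ lands on the chord'' step rigorous and completes the proof.
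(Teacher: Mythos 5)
Your proof is correct and its core coincides with the paper's: the decisive fact in both is that, for $s$ small, the tangent to $\tau\mapsto T_i[u^0](\tau)$ stays close to $r_i$, which is transversal to each $Z_i^j$, so the wave curve crosses each $Z_i^j$ at most once and hence traverses $\Delta_i^k$ along a single parameter interval. Where you go beyond the paper is in the reduction to this fact: the paper disposes of the discretization issue with the single sentence ``by the construction of the approximate Riemann solver\dots it is sufficient to prove\dots'', whereas you actually verify that the accurate solver's sampling by equispaced speeds $\vartheta_l$ cannot split the traversal of $\Delta_i^k$ between two adjacent outgoing fronts. Your mechanism for this --- strict concavity (resp.\ convexity) of $\tilde f_i$ on the relevant $\Delta_i^k$ forces $\conv_{[0,s]}\tilde f_i$ to be a single chord whose contact points lie outside $(s_k,s_{k+1})$, so $\sigma_i$ is constant on an interval covering $[s_k,s_{k+1}]$, and by the $\min$/$\max$ convention defining $s_{i,l}$ a constant-speed plateau lands inside one discretization cell --- is sound and is exactly the content the paper leaves implicit. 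The only point to state carefully is that the concavity of $\tilde f_i$ on $\Delta_i^k$ is inherited from the sign condition \eqref{gnhull} only up to the small perturbation terms in $\tilde\lambda_i$ (which degenerate near $Z_i^k\cup Z_i^{k+1}$ where $\nabla\lambda_i\cdot r_i$ vanishes); this is the same structural assertion the paper invokes in Remark \ref{r:shocks_rarefaction} via \cite{BB}, so it is a citation rather than a gap.
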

\begin{proof}
By the construction of approximate Riemann solver and the uniformly small total variation of approximate solutions, it is sufficient to prove that the Lipschitz continuous curve $T_i[u^0](\cdot):[0,s]\rightarrow \R^N$ can intersect with $Z_i^j$ at most once for any $u^0\in \Omega$ and all $j$ if $s>0$ is sufficiently small. In fact, by Theorem \ref{t:ec} $\lim_{t\rightarrow 0}T_i[u^0](t)=r_i(u^0)$, one has for $t\in[0,s]$,
\[
 \left|\frac{d}{dt}T_i[u^0](t)-r_i(T_i[u^0](t))\right|\lesssim t.
\]
Recall the assumption $r_i(u)\cdot \mathtt{n}>0$ on $Z^j_i$, one has
\[
 \frac{d}{dt}T_i[u^0](t)\cdot \mathtt{n}>0\ \text{on}\ Z^j_i \quad \text{for $t\in[0,s]$}
\]
as long as $s$ small enough. This concludes that $T_i[u^0](\cdot):[0,s]\rightarrow \R^N$ can intersect $Z_i^j$ at most once.

\end{proof}

Suppose $y_1\in \S_i^{k'}$, $y_2\in \S_i^{k''}$ and $y_1, y_2$ belong to the same wave front, then we say artificially $y_1$ is on the left (right) of $y_2$ if $k',k''$ are even (odd) and $k'<k''\ (k'>k'')$. Then it is easy to see the following lemma.

\begin{lemma}\label{l:non-crossing}
 Suppose two subdiscontinuity fronts $y_1\in \S_i^{k'}$, $y_2\in \S_i^{k''}$ interact and generate two subdiscontinuity fronts $y'_1\in \S_i^{k'}$, $y'_2\in \S_i^{k''}$. Then if $k'<k''\ (k'>k'')$, $y_1$ must be on the left (right) of $y_2$ and also $y'_1$ must be on the left (right) of $y'_2$.
\end{lemma}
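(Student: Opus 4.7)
The proof rests on two structural facts about $i$-wave curves under the piecewise genuine nonlinearity hypothesis. First, property (2) of the definition forces $j\mapsto\omega^j[\ul]$ to be strictly monotone along any rarefaction curve, and the same property carries over to the wave curve $T_i[\ul]$ (its $r_i$-tangency at $\ul$ and Lipschitz proximity to $R_i[\ul]$ preserve the transversal crossings of the manifolds $Z_i^j$): as $\tau$ runs from $0$ to $s$, the curve $T_i[\ul]$ traverses the regions $\Delta_i^k$ in strictly increasing order of $k$. Second, when $s>0$ the Liu-admissible speed $\sigma_i[\ul](s,\tau)=\frac{d}{d\tau}\conv_{[0,s]}\f_i[\ul](s,\tau)$ is non-decreasing in $\tau$, and for $s<0$ it is non-increasing via the concave envelope. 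Together these imply that the $(i,k)$-subdiscontinuities of a single wave front are laid out along the wave curve in the order of the even (resp.\ odd) indices $k$, with non-decreasing (resp.\ non-increasing) speed.

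For the outgoing ordering, the interaction produces a single composed outgoing $i$-wave supported on one wave curve $T_i[\ul^\ast]$, which the approximate Riemann solver \eqref{e:aew} splits into physical fronts with speeds $\vartheta_l$ non-decreasing in $l$. The outgoing subdiscontinuity $y'_n\in\S_i^{k_n}$ lives on the portion of the curve inside $\Delta_i^{k_n}$. Assuming $k',k''$ both even with $k'<k''$, the first fact places the $\tau$-range of $y'_1$ before that of $y'_2$, and the second fact then shows that the physical front carrying $y'_1$ has speed no greater than the one carrying $y'_2$. Hence either $y'_1$ is physically to the left of $y'_2$, or they lie on the same physical front and the artificial convention introduced just before the lemma (smaller even $k$ means ``on the left'') applies.

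For the incoming ordering, if $y_1$ and $y_2$ lie on the same physical front the statement is exactly the convention. Otherwise they lie on two distinct adjacent incoming $i$-fronts $A=[\ul_A,\ur_A]$ and $B=[\ul_B,\ur_B]$ that meet at the interaction point. In the case $s_A,s_B>0$ with $A$ on the left one has $\ur_A=\ul_B$, so the concatenated curve $T_i[\ul_A]$ traced from $\ul_A$ through $\ul_B$ to $\ur_B$ carries $y_1$ over a subinterval of $[0,s_A]$ inside $\Delta_i^{k'}$ and $y_2$ over a subinterval of $[s_A,s_A+s_B]$ inside $\Delta_i^{k''}$; the strict monotonicity of $j\mapsto\omega^j[\ul_A]$ then forces $k'<k''$. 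The opposite spatial arrangement is excluded by repeating the argument with $T_i[\ul_B]$, which would yield $k''<k'$. The cases $s_A,s_B<0$ are treated symmetrically using the concave envelope and the reversed odd-$k$ convention.

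The main obstacle is the mixed-sign case, in which $s_A$ and $s_B$ have opposite signs so that the two incoming curves do not concatenate into a single monotone wave curve. Here one appeals to Lemma \ref{l:1subdis}, which guarantees that the interaction produces at most one outgoing subdiscontinuity in each $\S_i^k$, so that the identification of the surviving subdiscontinuities $y'_1,y'_2$ with their incoming counterparts is determined by cancellation; the ordering of the $\Delta_i^k$-indices along the surviving wave curve then controls both the incoming and outgoing positions via the same two structural facts used above.
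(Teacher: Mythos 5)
The paper offers no proof of this lemma (it is declared ``easy to see'' from the ordering convention and Lemma \ref{l:1subdis}), and your argument correctly supplies the intended details: the wave curve, and the concatenation of the two adjacent incoming wave curves, crosses each $Z_i^j$ at most once and hence meets the regions $\Delta_i^k$ in monotone order of $k$ (the same transversality argument as in Lemma \ref{l:1subdis}), while the monotonicity of $\tau\mapsto\sigma_i[u](s,\tau)$ from Theorem \ref{t:ec} orders the outgoing physical fronts by speed, with the artificial convention covering coincident fronts. One remark: the ``mixed-sign'' case you single out as the main obstacle is in fact vacuous --- subdiscontinuities of opposite parity sit on incoming fronts of opposite signed strength, and the outgoing composed $i$-wave carries a single sign, so it cannot contain surviving subdiscontinuities in both an even class and an odd class; the lemma's hypothesis is then never satisfied in that case, and no cancellation argument is needed.
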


Base on these two lemmas, we can follow the idea of \cite{BC} to define approximate subdiscontinuity curves.

\begin{definition}\label{d:asc}
   		 Given $\epsilon\ne 0$, in $u_\nu$, an $(\epsilon,i,k)$-approximate subdiscontinuity curve is a polygonal line in $(x,t)$-plane with nodes $(t_0,x_0),(t_1,x_1),\cdots,(t_n,x_n)$ satisfying
\begin{enumerate}
 \item $(t_j,x_j)$ are interaction points with $0\leq t_0<t_1<\cdots<t_n$.
\item For $1\leq j \leq n$ the segment joining $(t_{j-1},x_{j-1}),\ (t_j,x_j)$ is an $(i,k)$-subdiscontinuity front with $s_i^{k}\geq \epsilon/2$ when $\epsilon>0$ ($s_i^{k}\leq \epsilon/2$ when $\epsilon<0$), and there is at least one index $j'\in \{1,\cdots,n\}$ such that the wave front connecting $(t_{j'-1},x_{j'-1})$, $(t_{j'},x_{j'})$ has $(i,k)$-substrength $s_i^{k}\geq \epsilon$ if $\epsilon>0$ ($s_i^{k}\leq \epsilon$ if $\epsilon<0$).
\item $\forall k<N$, one selects the $(i,k)$-subdiscontinuity fronts with larger speed, that is, if two fronts with substrengths $s_i^k>\epsilon/2$ interact at the node $(x_k,t_k)$, then the front of the $(\epsilon,i,k)$-approximate subdiscontinuity curve is the one coming from the left.
\end{enumerate}
\end{definition}

An $(\epsilon,i,k)$-approximate subdiscontinuity curve which is maximal w.r.t. set inclusion is called a \emph{maximal $(\epsilon,i,k)$-approximate subdiscontinuity curve}.

Let $M^{k}_{i,\nu}(\epsilon)$ be the number of maximal $(\epsilon,i,k)$-approximate subdiscontinuity curves in $u_\nu$.
\begin{lemma}
For fixed $k$ and $\epsilon$, $M^{k}_{i,\nu}(\epsilon)$ is uniformly bounded w.r.t $\nu$.
\end{lemma}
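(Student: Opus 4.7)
The plan is to enumerate each maximal $(\epsilon,i,k)$-approximate subdiscontinuity curve by its birth (starting node), and show that the total count of births is controlled by the initial total variation of $u_0$ together with the total mass of the Glimm interaction and interaction-cancellation measures, all of which are uniformly bounded in $\nu$.

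First I note that by Lemma \ref{l:1subdis} there is at most one outgoing $(i,k)$-subdiscontinuity front at each interaction point, and by rule (3) of Definition \ref{d:asc} the forward continuation of each maximal curve through any interaction is deterministic (one continues on the leftmost incoming $(i,k)$-subdiscontinuity of substrength $\geq \epsilon/2$). Consequently each maximal curve is in bijection with its birth node, so $M^k_{i,\nu}(\epsilon)$ equals the number of distinct birth events. Births split into two types: (a) at $t=0$, from an $(i,k)$-subdiscontinuity front in $u_{0,\epsilon_\nu}$ with substrength $\geq \epsilon/2$; and (b) at an interaction point $P$ with $t_P>0$, where the outgoing $(i,k)$-subdiscontinuity has substrength $\geq \epsilon/2$ but no incoming $(i,k)$-substrength reaches $\epsilon/2$. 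Type-(a) births are at most $2\,\mathrm{Tot.Var.}(u_0)/\epsilon$ by \eqref{initial approx}, uniform in $\nu$.

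For type-(b) births I count dually, by counting curve terminations. Writing $B$ for the total number of births, $T_{\text{can}}$ for cancellation terminations (the front dies through cancellation with a sign-opposite same-family wave), $T_{\text{mer}}$ for merger terminations (rule (3) kills the rightmost of two colliding maximal curves), and $K_\infty$ for the number of curves still active at the final time (bounded by $2V(\infty)/\epsilon \lesssim 1/\epsilon$), one has $B = K_\infty + T_{\text{can}} + T_{\text{mer}}$. Each cancellation-termination consumes $\mu^{IC}_\nu$-mass at least $\epsilon/2$ (the dying front has substrength $\geq \epsilon/2$ and is recorded by the cancellation part of the balance estimate), so $T_{\text{can}} \lesssim \mu^{IC}_\nu(\R^+\times\R)/\epsilon$, uniform in $\nu$.

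It remains to bound $T_{\text{mer}}$, and this is the main technical obstacle. At each merger-termination point $P$ the Glimm interaction estimate \eqref{gie} together with Lemma \ref{l:1subdis} and the parametrization of the wave curves from Section \ref{s:rp} yield that the outgoing $(i,k)$-substrength is at most the sum of the incoming $(i,k)$-substrengths plus $O\bigl(\mu^I_\nu(\{P\})\bigr)$. Two incoming fronts of substrength just below $\epsilon/2$ can in principle merge into one of substrength just above $\epsilon/2$ with negligible interaction measure, so one cannot naively charge every merger against $\mu^I_\nu$. The resolution, in the spirit of \cite{BC}, is to exploit the defining property that every maximal curve has some node of substrength $\geq \epsilon$: pair each such merger either with a point of large interaction measure ($\mu^I_\nu(\{P\})\gtrsim \epsilon$) or, recursively, with a later \emph{amplification event} on the continuing curve where the substrength must ascend from $[\epsilon/2,\epsilon)$ to $\geq \epsilon$, which by the same Glimm estimate again costs either $\mu^I_\nu$ or $\mu^{IC}_\nu$ mass of order $\epsilon$. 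Verifying that this pairing is injective up to uniformly bounded multiplicity gives $T_{\text{mer}} \lesssim (\mu^I_\nu+\mu^{IC}_\nu)(\R^+\times\R)/\epsilon$, and hence
\[
M^k_{i,\nu}(\epsilon) \;\lesssim\; \frac{1}{\epsilon}\Bigl(\mathrm{Tot.Var.}(u_0)+(\mu^I_\nu+\mu^{IC}_\nu)(\R^+\times\R)+1\Bigr),
\]
which is uniform in $\nu$ thanks to the Glimm functional estimates \eqref{e:gpe_ve}.
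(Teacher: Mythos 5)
Your overall strategy (charge the creation/destruction of each maximal curve to uniformly bounded quantities) is the right one, but there is a genuine gap in how you charge terminations. You claim that each ``cancellation termination'' consumes $\mu^{IC}_\nu$-mass at least $\epsilon/2$ because ``the dying front has substrength $\geq\epsilon/2$''. Termination of a maximal curve only requires that the \emph{outgoing} $(i,k)$-substrength drop below $\epsilon/2$; it does not require the front to be annihilated. An incoming substrength of exactly $\epsilon/2$ can fall to $\epsilon/2-\delta$ through an interaction with a front of strength $O(\delta)$, so the interaction--cancellation mass deposited \emph{at the termination node} can be arbitrarily small, and your bound $T_{\mathrm{can}}\lesssim\mu^{IC}_\nu(\R^+\times\R)/\epsilon$ does not follow. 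The paper's proof avoids exactly this trap by not localizing the charge at the final node: it uses condition (2) of Definition \ref{d:asc} (some front on the curve has substrength $\geq\epsilon$, while every outgoing substrength at the terminal node is $<\epsilon/2$) to guarantee a \emph{total} decrease of at least $\epsilon/2$ accumulated along the whole curve, and then shows that each elementary decrease $\theta$ at an interaction point $P$ costs $\mu^I_\nu(\{P\})\gtrsim\epsilon\theta$ when the colliding front is transversal and $\mu^{IC}_\nu(\{P\})\gtrsim\theta$ when it is of the same family. Summing along the curve gives $\mu^{IC}_\nu(y_\nu)\gtrsim\epsilon^2$ per terminating curve, hence $O(\epsilon^{-2})$ such curves. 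Note the extra factor of $\epsilon$ in the transversal case: your final bound of order $\epsilon^{-1}$ is stronger than what this mechanism yields, which is a further sign that the charging argument is not closed.

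The second unresolved point is your treatment of mergers: you correctly identify that two incoming substrengths just below $\epsilon/2$ could combine into one just above $\epsilon/2$ with negligible interaction measure, but the proposed fix (a recursive pairing with later ``amplification events'', claimed injective up to bounded multiplicity) is left entirely unverified, and it is precisely the hard part of your route. The paper sidesteps this bookkeeping altogether: it splits the curves into those born at $t=0$ and those that never terminate (each $O(\epsilon^{-1})$ by the uniform bound on total variation), and those that both start at positive time and terminate, for which the integrated-decrease estimate $\mu^{IC}_\nu(y_\nu)\gtrsim\epsilon^2$ applies; no separate merger analysis is needed. To repair your argument you would have to either carry out the pairing in full or replace the per-event charges by the paper's along-the-curve accumulation.
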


\begin{proof}
We consider the case for $\epsilon>0$ (and the case when $\epsilon<0$ is similar). Since the total variation of $u_\nu(0,\cdot)$ is uniformly bounded by \eqref{initial approx}, the number of maximal $(\epsilon,i,k)$-approximate subdiscontinuity curves which start at time $t=0$ is clearly of order $\epsilon^{-1}$. The number of  $(\epsilon,i,k)$-approximate subdiscontinuity curves which start at a time $t_0>0$ and do not end in finite time is also of order $\epsilon^{-1}$, because the total variation of $u_\nu(\cdot,t)$ is uniformly bounded w.r.t time $t$ and $\nu$.

Now considering an $(\epsilon,i,k)$-approximate subdiscontinuity curve $y_\nu$ which starts at a time $t_0>0$ and ends in finite time, we claim that
\begin{equation}\label{claim_mu}
 \mu^{IC}_\nu(y_\nu)\gtrsim \epsilon^2.
\end{equation}
If the claim is true, as the total amount of interaction and cancelation in the solution $u_\nu$ is uniformly bounded, the number of such $(\epsilon,i,k)$-approximate subdiscontinuity curves is of order $\epsilon^{-2}$.

Thus, combining these situations, we finally obtain the estimate
\[
 M^{k}_{i,\nu}(\epsilon)\lesssim \epsilon^{-2},
\]
which is uniformly valid as $\nu\rightarrow\infty$.

Now we prove the claim. Suppose one $(i,k)$-subdiscontinuity front on $y_\nu$ with $(i,k)$-substrength $\alpha>0$ interact with a front of $j$-th family with strength $\beta^*$ at point $P$, generating an $i$-wave with $(i,k)$-substrength $\gamma\geq 0$ which means that either there is a front of $i$-wave with $(i,k)$-substrength $\gamma>0$ or there is no fronts of $i$-wave having $(i,k)$-substrength. We also assume that  $\gamma<\alpha$. Setting $\theta=\alpha-\gamma$.

First, we consider the case when $i\ne j$, we assume that $i>j$, the case $i<j$ is similar to prove. For notational convenience, we also denote $\alpha, \beta^*, \gamma$ as the front themselves.

Assume that $\alpha$ locates on the front $[\ul,u^M]$ with $u^M=T_i[\ul](\alpha^*)$ and $\ur=T_j[u^M](\beta^*)$,  $\gamma$ locates on the front of the $i$-wave $[\tilde{u}^L,\tilde{u}^R]$ with $\tilde{u}^R=T_i[\tilde{u}^L](\gamma^*)$.

We know that (see section 9.9 of \cite{Daf2})
\begin{subequations}
\label{e:diffwave}
\begin{equation}
\label{e:diffwave_L}
\tilde{u}^L=u^L+s_j \sum_{j<i}r_j(u^M)+O(|s^L|)\alpha^*+o(|s^L|),
\end{equation}
\begin{equation}
\label{e:diffwave_R}
\tilde{u}^R=u^R-s_j \sum_{j>i}r_j(u^M)+O(|s^R|)\beta^*+o(|s^R|),
\end{equation}
\end{subequations}
where
\[
 s^L=(s_1,\cdots,s_{i-1},0,\cdots,0)\quad \text{and} \quad s^R=(0,\cdots,0,s_{i+1},\cdots,s_N).
\]

 Then, by \eqref{gie}, the assumption $i>j$ and $|u^M-u^R|\lesssim |\beta^*|$, we have
\begin{subequations}\label{e:dlr}
\begin{equation}
\label{e:dl}
 |\tilde{u}^L-u^L|\lesssim |\beta^*|+\I(\alpha^*,\beta^*),
\end{equation}
\begin{equation}
\label{e:dr}
 |u^M-\tilde{u}^R|\lesssim |\beta^*|+ \I(\alpha^*,\beta^*).
\end{equation}
\end{subequations}

From Glimm's interaction estimates \eqref{gie}, the parametrization \eqref{e:para_T_k} and the estimates \eqref{e:dlr}, one concludes that the difference of $(i,k)$-substrength between $\alpha$ and $\gamma$ is controlled by the amount of interaction and the strength of the wave $\beta^*$, that is
\begin{equation}
 \theta=\alpha-\gamma \lesssim \I(\alpha^*,\beta^*)+|\beta^*|.
\end{equation}

From definition of $\I$ (see Section \ref{interaction amount}), we know that here $\I(\alpha^*,\beta^*)=|\alpha^*\beta^*|$. So we get $|\beta^*|\gtrsim \theta$ since $|\alpha^*|\ll 1$. Therefore from $|\alpha^*|\geq \epsilon/2$, by the estimates in (i)-(iii), we obtain
\begin{equation*}
  \I(\alpha^*,\beta^*)\gtrsim \epsilon\theta.
\end{equation*}
By notation of interaction measure \eqref{imeas}, one obtains
\begin{equation}\label{ime}
 \mu^I_\nu(P)\gtrsim \epsilon \theta.
\end{equation}

Next we consider the case when $i=j$ and $\alpha$ is on the left of $\beta^*$. (It is similar for the case when $\alpha$ is on the right  of $\beta^*$.)

 First we assume that $\alpha>0,\beta^*<0$. Since the decreasing of the $(i,k)$-substrength can only be caused by interaction and cancellation effects, then similarly one has
\begin{equation*}
  \theta\lesssim |\beta^*|+ \I(\alpha^*,\beta^*).
\end{equation*}

Second, we assume that $\alpha>0,\beta^*>0$.

From the equality \eqref{e:diffwave_L}, one know that
\begin{equation}\label{e:same_wave_L}
 \langle l^0, \tilde u^L-u^L\rangle \lesssim \I(\alpha^*,\beta^*).
\end{equation}
From the equality \eqref{e:diffwave_R},
\begin{equation*}
\begin{split}
&\langle l^0, u^M-\tilde u^R\rangle+\langle l^0, u^M-u^R\rangle  \\
                               =    & \langle  l^0, u^R-\tilde u^R\rangle\lesssim \I(\alpha^*,\beta^*).
\end{split}
\end{equation*}
Since $\langle l^0, u^M-u^R\rangle =-\beta^*<0$, one has
\begin{equation}\label{e:same_wave_R}
\begin{split}
 \langle l^0, u^M-\tilde u^R\rangle \lesssim \I(\alpha^*,\beta^*).
\end{split}
\end{equation}

Noticing that \eqref{e:same_wave_L} and \eqref{e:same_wave_R} implies that the decreasing of the $(i,k)$-substrength can be controlled by the amount of interaction, one has
\[
 \theta \lesssim \I(\alpha^*,\beta^*).
\]

Therefore from the definition of $\mu^{IC}_\nu$ \eqref{icmeas}, in the case when $i=j$ one has
\begin{equation}\label{canem}
 \mu^{IC}_\nu(P)\gtrsim \theta.
\end{equation}

 From (2) of Definition \eqref{d:asc}, the $(i,k)$-substrength all front outgoing from the terminal point of $y_\nu$ must be less than $\epsilon/2$ and there is at least one front on $y_\nu$ has $(i,k)$-substrength larger than $\epsilon$. Then by \eqref{ime} and \eqref{canem}, one can conclude that the claim \eqref{claim_mu} is true.
\end{proof}
\vspace{12pt}
 Up to a subsequence, one can assume that $M^{k}_{i,\nu}(\epsilon) = \bar{M}^{k}_{i}(\epsilon)$ is a constant independent of $\nu$ .
Denote
\begin{equation*}
 y_{m,\nu}^{k,\epsilon}:[t_{m,\nu}^{k,\epsilon-},t_{m,\nu}^{k,\epsilon+}]\rightarrow \R,\qquad m=1,\cdots ,\bar{M}^{k}_{i}(\epsilon)
\end{equation*}
as the maximal  $(\epsilon,i,k)$-approximate subdiscontinuity curves in $\unu$.

Define $\T^k_{i,\nu}(\epsilon)$ as the collection of all maximal $(\epsilon,i,k)$-approximate subdiscontinuity curves in $\unu$ for fixed $\epsilon,i$ and $k$, i.e.
\begin{equation*}
\T^k_{i,\nu}(\epsilon)=\{y_{m,\nu}^{k,\epsilon}: \ m=1,\cdots ,\bar{M}^{k}_{i}(\epsilon)\}.
\end{equation*}

Set
\begin{equation*}
 \mathscr{T}_{i,\nu}:= \bigcup_{k,\epsilon}\mathscr{T}_{i,\nu}^k(\epsilon).
\end{equation*}
as the family of all approximate subdiscontinuity curves of $i$-th family in $u_\nu$.

Up to a diagonal argument and by a suitable labeling of the curves, one can assume that for each fixed $h$, $m$, as $\nu\rightarrow \infty$ the Lipschitz continuous curves
$y_{m,\nu}^{k,\epsilon}$ converge uniformly to some Lipschitz continuous curves $y_{m}^{k,\epsilon}$ which are called \emph{$(\epsilon,i,k)$-subdiscontinuity curves}. Let us denote by
\begin{equation*}
\mathscr{T}_i^k(\epsilon) := \{y_{m}^{k,\epsilon}: \ m=1,\cdots \bar{M}^{k}_{i}(\epsilon)\}
\end{equation*}
the collection of all these limiting curves for fixed $i,k,\epsilon$.

Let
\begin{equation*}
\mathscr{T}_i:=\bigcup_{k,\epsilon}\mathscr{T}_i^k(\epsilon).
\end{equation*}
denote the collection of all these \emph{$i$-subdiscontinuity curves}.

\begin{lemma}
Let $y^{k}_m :]t^-_m,t^+_m[\rightarrow \R$ be an $(\epsilon,i,k)$-subdiscontinuity curve. If $t\in ]t^-_m,t^+_m[$ is such that $(t,y^{k}_m(t))\notin \Theta$, then the derivative $\dot{y}^{k}_m(t)$ exists.
\end{lemma}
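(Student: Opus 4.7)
The strategy is to control the slope oscillation of the polygonal approximants $y^{k,\epsilon}_{m,\nu}$ in a small space-time neighborhood of $P_0:=(t,y^{k}_m(t))$ by the mass of $\mu^{IC}_\nu$ there, and to use $P_0\notin\Theta$ to force this oscillation to vanish in the limit. Since each $y^{k,\epsilon}_{m,\nu}$ is piecewise linear with nodes at interaction points, its derivative is a piecewise constant function whose total variation is given by the sum of the slope jumps at the nodes; the plan is therefore to bound each such jump by a multiple of $\mu^{IC}_\nu(\{P\})$.

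First I would localize slope jumps at interaction nodes. On each linear piece of $y^{k,\epsilon}_{m,\nu}$ the slope is the shock speed $\bar\sigma_i[u^L](s,\tau)$ of the tracked $(i,k)$-subdiscontinuity. At a node where this front (of total strength $\alpha^*$ with $|\alpha^*|\geq\epsilon/2$ by Definition~\ref{d:asc}) meets an incoming wave of strength $\beta^*$, Glimm's interaction estimate \eqref{gie} together with \eqref{e:dlr} yields
\[
|\tilde u^L-u^L|+|\tilde s-s|\lesssim |\beta^*|+\mathcal{I}(\alpha^*,\beta^*),
\]
so by Lipschitz dependence of $\bar\sigma_i$ on its arguments the slope jump is bounded by the same quantity. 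Using $|\alpha^*|\geq\epsilon/2$, in the transversal case $\mathcal{I}(\alpha^*,\beta^*)=|\alpha^*\beta^*|$ forces $|\beta^*|\leq 2\mathcal{I}(\alpha^*,\beta^*)/\epsilon$, while in the same-family case the analogues \eqref{e:same_wave_L}--\eqref{e:same_wave_R} together with the cancellation term in \eqref{icmeas} yield an analogous bound in terms of $\mu^{IC}_\nu$. Summing, the slope jump at a node $P$ is at most $C\epsilon^{-1}\mu^{IC}_\nu(\{P\})$.

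Next I would pass to the limit. Let $L$ be a uniform Lipschitz constant for all the approximate subdiscontinuity curves (coming from the uniform bounds $\check\lambda_0\leq\dot y\leq\check\lambda_N$). Since $P_0\notin\Theta$, for any $\eta>0$ choose $\delta>0$ with $\mu^{IC}(\overline{B_{2L\delta}(P_0)})<\eta$. By uniform convergence $y^{k,\epsilon}_{m,\nu}\to y^{k}_m$ and weak convergence $\mu^{IC}_\nu\rightharpoonup\mu^{IC}$, for $\nu$ large the graph of $y^{k,\epsilon}_{m,\nu}$ over $[t-\delta,t+\delta]$ is contained in $\overline{B_{2L\delta}(P_0)}$ and $\mu^{IC}_\nu(\overline{B_{2L\delta}(P_0)})<2\eta$. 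Summing the previous step over the interaction nodes of $y^{k,\epsilon}_{m,\nu}$ in this window gives $\mathrm{Tot.Var.}_{[t-\delta,t+\delta]}\dot y^{k,\epsilon}_{m,\nu}\leq 2C\eta/\epsilon$, hence any two chord slopes of $y^{k,\epsilon}_{m,\nu}$ on $[t-\delta,t+\delta]$ differ by at most $2C\eta/\epsilon$. Passing to the limit $\nu\to\infty$ the same oscillation bound holds for chord slopes of $y^{k}_m$. Letting $\eta\to 0$ (so $\delta(\eta)\to 0$) makes this oscillation vanish in arbitrarily small intervals around $t$, which is exactly the existence of $\dot y^{k}_m(t)$.

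The main obstacle is the first step: packaging all possible slope jumps at an interaction into a uniform bound involving $\mu^{IC}_\nu$, with a careful case split (transversal families versus same family, signed strengths, with or without cancellation). The essential ingredient is the lower bound $|\alpha^*|\geq\epsilon/2$ built into Definition~\ref{d:asc}, which produces the $\epsilon^{-1}$ factor; since $\epsilon$ is fixed for the curve under consideration, this factor is harmless in the final limit $\eta\to 0$.
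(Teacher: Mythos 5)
Your argument follows the same route as the paper's proof: the paper likewise controls the speed oscillation of the approximate subdiscontinuity fronts near $(t,y^k_m(t))$ by the interaction--cancellation measure $\mu^{IC}_\nu$, uses $(t,y^k_m(t))\notin\Theta$ (so the point is not an atom of $\mu^{IC}$) to make that oscillation vanish on shrinking neighborhoods, and then passes to the limit via the uniform convergence $y^{k}_{m,\nu}\to y^k_m$. Your write-up simply makes explicit, via \eqref{gie}, \eqref{e:dlr} and the lower bound $|s^k_i|\geq\epsilon/2$ from Definition \ref{d:asc}, the node-by-node slope estimate that the paper asserts in one line, so the two proofs are essentially identical.
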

\begin{proof}
    There exist $\epsilon_0>0$ and $\gnum \in \T^k_{i,\nu}(\epsilon_0)$, such that
\[
 \gnum\rightarrow \gm,
\]
as $\nu\rightarrow \infty$ and the substrength of $\gnum$ is $|s^{k}_i|\geq \epsilon_0$.

Since in the wave-front tracking approximation, the change of speed of $i$-subdiscontinuity fronts is controlled by the measure $\mu^{IC}_\nu$.

Then for any $\delta_\nu\rightarrow 0$, we deduce that
\begin{equation*}
 \limsup_{\nu\rightarrow \infty}\sup_{|t-t'|<\delta_\nu}|\dot{y}^{k}_{m,\nu}(t)-\dot{y}^{k}_{m,\nu} (t')|=0.
\end{equation*}
From uniformly convergence $\gnum\rightarrow \gm$ on a neighborhood of $t$, we obtain
\begin{equation}\label{subdis.-conv.}
  \dot{y}_m^{k}(t)=\lim_{\nu\rightarrow \infty} \dot{y}^{k}_{m,\nu}.
\end{equation}
\end{proof}

Recall the definition of generalized characteristics which will be used in the proof of Theorem \ref{t:main theorem}.
\begin{definition}
 A \emph{generalized $i$-characteristic} associated with the approximate solution $u_\nu$, on the time interval $[t_1,t_2]\subset [0,\infty)$, is a Lipschitz continuous function $\chi:[t_1,t_2]\rightarrow (-\infty,\infty)$ which satisfies the differential conclusion
\[
 \dot{\chi}(t)\in [\lambda_i(u_\nu(x+,t)),\lambda_i(u_\nu(x-,t))].
\]
\end{definition}

For any given $(T,\bar{x})\in \R$, we consider the \emph{minimal (maximal) generalized $i$-characteristic} through $(T,\bar{x})$ defined as
\[
 \chi^{-(+)}(t)=\min(\max)\{\chi(t):\chi \text{ is a generalized characteristic, }\chi(T)=\bar{x}\}.
\]

The properties of approximate solutions yield that there is no wave-front of $i$-th family crossing $\chi^+$ from the left or crossing $\chi^-$ from the right.

Suppose $\T_{i,\nu} \ni y'_\nu:[t'^-_\nu,t'^+_\nu]\rightarrow \R$ and $ \T_{i,\nu} \ni y''_\nu(\cdot):[t''^-_\nu,t''^+_\nu]\rightarrow \R$. By Lemma \ref{l:non-crossing}  and the definition of $(\epsilon,i,k)$-approximate subdiscontinuity curves, it turns out that either
\[
 y'_\nu(t)\leq \ y''_\nu(t),\quad \forall t\in [t'^-_\nu,t'^+_\nu]\cap [t''^-_\nu,t''^+_\nu],
\]
or
\[
 y'_\nu(t)\geq \ y''_\nu(t), \quad \forall t\in [t'^-_\nu,t'^+_\nu]\cap [t''^-_\nu,t''^+_\nu].
\]
This makes the following definition well defined.
\begin{definition}\label{shock order}
Suppose $y'\in \T^{k'}_{i}(\epsilon'), \ y'' \in \T^{k''}_{i}(\epsilon'') $ where $k'$ and $k''$ are both odd numbers (or even numbers), and $y'_\nu\rightarrow y',\ y''_\nu \rightarrow y''$ as $\nu\rightarrow \infty$ and assume there exists a point $(t_0,x_0)$ with $t_0>0$ such that $y'(t_0)=y''(t_0)=x_0$, we say $y'\prec y''$ if there exists a neighborhood $[t^-,t^+]$ of time $t_0$ such that
\begin{itemize}
 \item $y'(t) \leq y''(t)$ for all $t \in [t^-,t^+]$,
 \item either there exist $t^* \in [t^-,t^+]$ such that $y'(t^*) < y''(t^*)$

       or  for all $t \in [t^-,t^+]$, $y'(t)=y'(t)$, and $k_1<k_2\ (k_1>k_2)$.
\end{itemize}
\end{definition}

The next lemma rules out the situation when two subdiscontinuity with different sign of strength is tangent at the same point which is not the atom point of interaction and cancellation measure.
\begin{lemma}\label{l:int of two subcurves}
Suppose that $y_1\in\T^{k_1}_i,\ y_2\in\T^{k_2}_i$ with $k_1$ is even and $k_2$ is odd, and assume that $y_1$ and $y_2$ pass through the same point $(x_0,t_0)$ with $\mu^{IC}(\{(x_0,t_0)\})=0$ and there is no subdiscontinuity curve $y_0$ such that
\[
 y_1(t)\leq y_0(t)\leq y_1(t)
\]
for any neigbourhood of $t_0$. Then $\dot{y}_1(t_0)\ne \dot{y}_2(t_0)$.
\end{lemma}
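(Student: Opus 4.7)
The plan is to argue by contradiction, so suppose $\dot y_1(t_0)=\dot y_2(t_0)=\sigma_0$ and, without loss of generality, $y_1(t)\le y_2(t)$ in a neighborhood of $t_0$, so the two limit curves are tangent at $P=(t_0,x_0)$. The first step is a sign observation on the approximating fronts: since $y_1\in\T_i^{k_1}$ with $k_1$ even, each approximating curve $y_{1,\nu}$ is built from $(i,k_1)$-subdiscontinuities of a \emph{positive}-strength $i$-wave (a shock in a region where $\nabla\lambda_i\cdot r_i<0$), while $k_2$ odd forces the fronts of $y_{2,\nu}$ to belong to \emph{negative}-strength $i$-waves. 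By the definition of $(\epsilon,i,k)$-approximate subdiscontinuity curve there are fixed $\epsilon_1,\epsilon_2>0$ with $|s_i^{k_1,\nu}|\ge\epsilon_1/2$ along $y_{1,\nu}$ and $|s_i^{k_2,\nu}|\ge\epsilon_2/2$ along $y_{2,\nu}$ for all large $\nu$.

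Next I localize near $P$. For small $r>0$ and $\nu$ large, both $y_{1,\nu}$ and $y_{2,\nu}$ enter the ball $B(P,r)$ with $y_{1,\nu}\le y_{2,\nu}$ at the entry time, and the vertical gap $y_{2,\nu}(t)-y_{1,\nu}(t)$ tends uniformly to zero on the time slice through $t_0$. The core dichotomy, valid for every sufficiently large $\nu$, is that inside $B(P,r)$ either (A) $y_{1,\nu}$ and $y_{2,\nu}$ collide at some interaction point $P_\nu\in B(P,r)$, or (B) at least one of them is terminated by an interaction inside $B(P,r)$. This is where the hypothesis of no intermediate subdiscontinuity curve $y_0$ between $y_1,y_2$ is used: any scenario in which both $y_{1,\nu},y_{2,\nu}$ traversed $B(P,r)$ uninterrupted and non-intersecting would force, passing to the limit, either a positive gap between $y_1$ and $y_2$ at $P$ (contradicting the tangency) or a tracked $i$-subdiscontinuity squeezed between them (contradicting the no-intermediate-curve hypothesis).

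In case (A) the collision is a same-family interaction of opposite signs, so definition \eqref{icmeas} yields
\begin{equation*}
\mu^{IC}_\nu(\{P_\nu\})\ge 2\min\bigl(|s_i^{k_1,\nu}|,|s_i^{k_2,\nu}|\bigr)\ge \min(\epsilon_1,\epsilon_2)=:c_0.
\end{equation*}
In case (B) the interaction terminating $y_{j,\nu}$ must drive its substrength from $\ge\epsilon_j/2$ below the $\epsilon_j/2$ threshold, and by the same-family cancellation estimate \eqref{canem} already used in the proof of the boundedness of $M_{i,\nu}^k(\epsilon)$ this deposits $\mu^{IC}_\nu$-mass $\gtrsim c_0$ inside $B(P,r)$. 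In either case $\mu^{IC}_\nu(B(P,r))\ge c_0$ for every large $\nu$, so by the weak convergence $\mu^{IC}_\nu\rightharpoonup\mu^{IC}$ applied to the closed ball $\bar B(P,r)$ and continuity from above of the bounded measure $\mu^{IC}$, one concludes $\mu^{IC}(\{P\})=\lim_{r\downarrow 0}\mu^{IC}(\bar B(P,r))\ge c_0>0$, contradicting the hypothesis $\mu^{IC}(\{P\})=0$. The main obstacle in this scheme is making the dichotomy of the second paragraph rigorous: excluding the possibility that $y_{1,\nu}$ and $y_{2,\nu}$ merely graze past each other arbitrarily closely inside $B(P,r)$ without ever interacting or being destroyed; the opposite signs of the substrengths, the absence of an intermediate tracked curve, and the uniform lower bound $\epsilon_j/2$ on the substrengths must all be combined to force the collision or termination into $B(P,r)$.
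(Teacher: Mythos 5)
The dichotomy at the heart of your second paragraph --- that for every large $\nu$ the approximating curves $y_{1,\nu}$, $y_{2,\nu}$ must either collide at an interaction point in $B(P,r)$ or be terminated by an interaction there --- is a genuine gap, and it is the one you yourself flag at the end; it cannot be closed with the ingredients you list. Between a positive $(i,k_1)$-subdiscontinuity and a negative $(i,k_2)$-subdiscontinuity of the same family the approximate solution generically contains a fan of many weak $i$-fronts (rarefaction fronts, which by definition carry no $(i,k)$-substrength, plus small mixed fronts) whose individual substrengths all lie below $\epsilon/2$ for every fixed $\epsilon$. Such a fan keeps $y_{1,\nu}$ and $y_{2,\nu}$ at a positive, $\nu$-dependent distance throughout $B(P,r)$, produces no same-family collision, need not terminate either curve, and is invisible to $\T_i$ because no $(\epsilon,i,k)$-approximate subdiscontinuity curve tracks it --- so the no-intermediate-curve hypothesis does not exclude it, and in this configuration your argument deposits no $\mu^{IC}_\nu$-mass at all. (Your treatment of cases (A) and (B) themselves is essentially sound; the problem is that a third case remains.)

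The paper's proof is built precisely to handle that residual configuration, and this is where it diverges from your plan. One joins a point $A_\nu$ on $y_{1,\nu}$ to a point $B_\nu$ on $y_{2,\nu}$ by a short segment; the absence of intermediate subdiscontinuity curves together with $\muic(\{P\})=0$ forces all fronts crossing $\overline{A_\nu B_\nu}$ to be weak and the transversal families to carry vanishing total strength, so the trace of $u_\nu$ along the segment stays close to the $i$-rarefaction curve. Because the two bounding jumps have opposite sign (one $k$ even, one odd), this trace must sweep a definite arc of that curve inside a single $\Delta^k_i$, yielding points $A'_\nu,B'_\nu$ with $|\lambda_i(u_\nu(A'_\nu))-\lambda_i(u_\nu(B'_\nu))|\gtrsim c$ for a $\nu$-independent $c>0$. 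Launching maximal/minimal generalized $i$-characteristics from $A'_\nu$ and $B'_\nu$ (forward or backward according to the sign of the speed difference), they focus in a region $\Gamma_\nu$ shrinking to $P$ which no $i$-front can leave, whence $\mu^{I}_\nu(\Gamma_\nu)\gtrsim c$ and the contradiction with $\muic(\{P\})=0$. This quantitative focusing-of-characteristics step is the missing idea in your proposal, and it is also the only place where the opposite parity of $k_1$ and $k_2$ is genuinely used.
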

\begin{proof}
Suppose $\T^{k_1}_{i,\nu}\ni y_{1,\nu}\to y_1,\ \T^{k_2}_{i,\nu}\ni y_{2,\nu}\to y_2$ and $t_{1,\nu},t_{2,\nu}\to t_0$. Let us denote the points $y_{1,\nu}(t_{1,\nu}),\ y_{2,\nu}(t_{2,\nu})$ as $A_\nu, B_\nu$ respectively. Since there is no subdiscontinuity curves between $y_1$ and $y_2$, the strengths of all fronts crossing the segment $\overline{A_\nu B_\nu}$ tend to zero.

 Moreover, the total strength of the fronts of the other family tends to zero. In fact, if not, either they are canceled in the neighborhood of $(x_0,t_0)$ or interacted with $y_{1,\nu}(t_{1,\nu}),\ y_{2,\nu}(t_{2,\nu})$, which implies the uniform positivity of $\mu_\nu^{IC}$ on a small region $\Gamma_\nu$. This contradicts the assumption that $\mu^{IC}(\{(x_0,t_0)\})=0$. Therefore, the values of each $u_\nu$ along the segment $\overline{A_\nu B_\nu}$ remain arbitrary close to $i$-rarefaction curve.

On the other hand, {\bf since the sign of the strengths of $y_1$ and $y_2$ are different}, one can always find $A'_\nu,\ B'_\nu$ on $\overline{A_\nu B_\nu}$ and a positive constant $c$ such that $u_\nu(A'_{\nu}),u_\nu(B'_{\nu})\in \Delta^k_i$ for some $k$ and $|u_\nu(A'_{\nu})-u_\nu(B'_{\nu})|>c$, therefore

\[
 |\lambda_i(u_\nu(A'_\nu))-\lambda_i(u_\nu(B'_\nu))|\gtrsim c.
\]

Up to subsequence, we can assume that for all $\nu$
\begin{subequations}
\label{char.diff}
\begin{equation}
\label{char.diff-1}
 \lambda_i(u_\nu(A'_\nu))-\lambda_i(u_\nu(B'_\nu))>c/2,
\end{equation}
\begin{equation}
\label{char.diff-2}
\text{or } \lambda_i(u_\nu(A'_\nu))-\lambda_i(u_\nu(B'_\nu))<c/2.
\end{equation}
\end{subequations}

Let us consider the case \eqref{char.diff-1}, the other case is analogous to prove. We take $\chi^+$ through $A'_\nu$, $\chi^-$ through $B'_\nu$. Since $A'_\nu$ and $B'_\nu$ are in the same $\Delta_i^k$, if no uniformly large interaction occur on $\chi^+,\ \chi^-$, they will interact with each other.  We consider the region $\Gamma_\nu$ bounded by $\overline{A_\nu B_\nu},\ \chi^+$ and $\chi^-$. Since no fronts can leave $\Gamma_\nu$ through $\chi^+$ and $\chi^-$. By \eqref{d:interation amount ijwaves} and \eqref{ICmeas}, we obtain that $\mu^{I}(\Gamma_\nu)\gtrsim c$ which contradicts the assumption $\mu^{IC}(\{(x_0,t_0)\})=0$.
\end{proof}

\section{Proof of Theorem \ref{t:main theorem}}\label{s:pf}
Before proving the theorem, we recall the definition space-like curve.

\begin{definition}%[\cite{Bre}, Definition 7.2]
 Let $\hat{\lambda}$ be a constant larger than the absolute value of all characteristic speed. We say a curve $x=y(t),\ t\in[a,b]$ is \emph{space-like}  if
\[
 |y(t_2)-y(t_1)|>\hat{\lambda}(t_2-t_1)\quad {\rm for\ all}\ a<t_1<t_2<b.
\]

\end{definition}
From the definition one knows that any fronts can cross a space-like curve at most once.

\begin{proof}[Proof of Theorem \ref{t:main theorem}]
Let $\Theta$ consists of all jump points of initial data, the atom points of interaction and cancelation measure $\mu^{IC}$ and the points where two sub-discontinuity curves of different families cross each other.

Consider a point $P=(\tau,\xi)\notin \Theta$. Since $u(\cdot,\tau)$ has bounded variation, there exist the limits
\[
 u^L:=\lim_{x\rightarrow \xi-}u(x,\tau),\qquad u^R:=\lim_{x\rightarrow \xi+}u(x,\tau).
\]
Assuming that $\ur=T_i[\ul](s)$. We only consider the case for $s>0$ and the case for $s<0$ is analogous to prove.

  Applying the tame oscillation condition (see p.295 of \cite{BB}), one obtains
\begin{equation}\label{e:lim in triangle}
\lim_{\genfrac{}{}{0pt}{}{(x,t)\rightarrow(\xi,\tau)}{\tau\leq t <\tau+(\xi-x)/\hat{\lambda}} }u(x,t) = u^L,\quad \lim_{\genfrac{}{}{0pt}{}{(x,t)\rightarrow(\xi,\tau)}{\tau\leq t <\tau+(x-\xi)/\hat{\lambda}} }u(x,t) = u^R.
\end{equation}
for some constant $\hat \lambda$ which is larger than all characteristic speeds.

Suppose that there are $i$-subdiscontinuity curves $y_1^{k_1},\cdots,y_l^{k_l}$ satisfying  $y_j^{k_j}\in \T^{k_j}_i(\epsilon_j)$ with $\epsilon_j>0,\ 1\leq j\leq l$,
\[
y^{k_1}_1(\tau)=\cdots=y^{k_l}_l(\tau)=\xi
\]
and there is no other subdiscontinuity curve passing through the point $P$. This can be done because of the conclusion of Lemma \ref{l:int of two subcurves} and the fact that $P\notin \Theta$. It is easy to show that $\epsilon_j$ must have the same sign, i.e. $\epsilon_{j_1}\epsilon_{j_2}>0$ for any $j_1,j_2\in\{1,\cdots,l\}$.

By rearranging the index, we can assume that
\begin{equation*}
 y^{k_1}_1\prec y^{k_2}_2\prec \cdots \prec y^{k_l}_l,
\end{equation*}
 and

\begin{itemize}
 \item[(H)]    there is no $y^{k_0}_0 \in \T_i$ such that $ y^{k_0}_0 \prec y^{k_1}_1$ with $y^{k_0}_0(t_0)=y^{k_1}_1(t_0)$ or $y^{k_{l}}_{l}\prec y^{k_{0}}_{0}$ with $y^{k_{0}}_{0}(t_0)=y^{k_{l}}_{l}(t_0)$.

\end{itemize}

{\bf Step 1.}  By the definition, there exist $y_{1,\nu}^{k_1},\cdots, y_{l,\nu}^{k_l}\in \T_{i,\nu}$ such that $y_{j,\nu}^{k_j}\rightarrow y_j^{k_j},\ \forall j\in\{1,\cdots,l\}$. We claim that
\begin{subequations}
\label{e:left_right_lim}
\begin{equation}
\label{e:leftlim}
\lim_{r \rightarrow 0+} \limsup_{\nu \rightarrow \infty} \left( \sup_{\genfrac{}{}{0pt}{}{x<y_{1,\nu}^k(t)}{(x,t)\in B(P,r)}} \big| u_\nu(x,t) - u^L \big| \right) = 0,
\end{equation}
\begin{equation}
\label{e:rightlim}
\lim_{r \rightarrow 0+} \limsup_{\nu \rightarrow \infty} \left( \sup_{\genfrac{}{}{0pt}{}{x>y_{l,\nu}^{k_l}(t)} {(x,t)\in B(P,r)}} \big| u_\nu(x,t) - u^R \big| \right) = 0,
\end{equation}
\end{subequations}
where $B(P,r)$ is a ball centred at $P$ with radius $r$.

Indeed, if \eqref{e:leftlim} is not true , by the first limit in \eqref{e:lim in triangle} and $u_\nu \rightarrow u$ pointwise a.e., there exist two sequences of points $P_\nu,\ Q_\nu$ converging to $ P$ and $P_\nu,\ Q_\nu$ on the left of $y^\nu_1$ such that the segment $\overline{\pnu \qnu}$ is space-like and
\[
u(P_\nu)\rightarrow u^L
\]
and
\[
 |u_\nu(P_\nu)-\unu(Q_\nu)|\geq \epsilon_0.
\]
It is not restrictive to assume that the direction $\overrightarrow{P_\nu Q_\nu}$ towards $\gnuone$.

Let $\Lambda_j(\pq)$ be the total wave strength of wave-fronts of $j$-th family which cross the segment $\pq$. Then, one has $\Lambda_j(\pq)\gtrsim \epsilon_0$ for some $j\in\{1,\cdots,d\}$. We consider the following three cases.

%\begin{enumerate}
{\bf Case 1.} If $j>i$, we take the maximal forward generalized $j$-characteristic $\chi^+$ through $P_\nu$ and minimal generalized $j$-characteristic $\chi^-$ through $Q_\nu$.

If $\chi^+$ and $\chi^-$ interact each other at $O_\nu$ before hitting $\gnuone$. We consider the region $\Gamma_\nu$ bounded by $\pq,\ \chi^+$ and $\chi^-$. Since no fronts can leave $\Gamma_\nu$ through $\chi^+$ and $\chi^-$. By \eqref{d:interation amount ijwaves} and \eqref{ICmeas}, we obtain that the amount of interaction and cancellation $\muicnu(\bar \Gamma_\nu)$ for $u_\nu$ within the closure of $\Gamma_\nu$ remains uniformly positive as $\nu \to \infty$.

If $\chi^+$ interacts $\gnuone$ at $A_\nu$ and $\chi^-$ interacts $\gnuone$ at $B_\nu$, we consider the region $\Gamma_\nu$ bounded by $\pq,\ \chi^+,\ \chi^-$ and $\gnuone$. Then either there exists a constant $0<c'_0<1$ such that $\muicnu(\Gamma_\nu)>c'_0 \epsilon_0$ or there exists a constant $0<c''_0<1$ such that fronts with total strength lager than $c''_0 \epsilon_0$ hitting $\overline{A_\nu B_\nu}$. By \eqref{d:interation amount ijwaves} and \eqref{ICmeas} we can determine that $\muicnu(\bar{\Gamma}_\nu)\gtrsim \epsilon$ uniformly.

For both above two cases, $\Gamma_\nu$ is contained in a ball $B(P,\r_\nu)$ with $\r_\nu\to 0$ as $\nu\to \infty$, which implies that $\muic(\{P\})>0$. This is against the assumption $P\notin \Theta$.

{\bf Case 2.} If $j<i$, we consider the minimal backward generalized $j$-characteristic through the point $P_\nu$ and the maximal backward generalized $j$-characteristic through the point $Q_\nu$. Then by the similar argument for the case $j>i$, we get $\muic(\{P\})>0$ against the assumptions.

{\bf Case 3.}  If $j=i$ and for any $ j'\ne i,\ 1\leq j' \leq d,\ \Lambda_{j'}(\pq)\rightarrow 0$ as $\nu \rightarrow \infty$.
We claim that the maximum of the strengths of all fronts which cross $\pq$  tends to zero when $\nu \rightarrow \infty$.

If it is not true, since $\Lambda_{j'}(\pq)\rightarrow 0$ for $j'\ne i$, there must be fronts of $i$-th family across $\pq$ with uniformly large strength  We assume that up to a subsequence, their $(i,k_0)$-substrength $s^{k_0}_\nu$ are uniformly large for some $k_0\in \{1,\cdots,\bar{k}_i\}$., that is $|s^{k_0}_\nu|\gtrsim \epsilon$ for some $\epsilon>0$.

Then by Definition \ref{d:asc} there must be, for some $\epsilon_0>0$, $(\epsilon_0,i,k_0)$-approximate subdiscontinuity curves $y^{k_0}_{0,\nu}$ which contains the wave fronts $s^{k_0}_\nu$ (we use the same notation as their strength for convenience) and since $y^{k_0}_{0,\nu}$ are uniformly Lipschitz continuous curves, up to a subsequence, there is a Lipschitz continuous curves $y^{k_0}_0$, such that
\[
 y^{k_0}_{0,\nu} \rightarrow y^{k_0}_0\in \T^{k_0}_i,\quad \nu \rightarrow \infty
\]
and $y^{k_0}_0(\tau)=\xi$. By Definition \ref{shock order} we obtain $y^{k_0}_0\prec y^{k_1}_1$, which contradicts the assumption (H).

So we can always choose $\qnu',\ \pnu'\in \overline{\pnu \qnu}$ such that
\[
 \unu(\qnu')\rightarrow u^L
\]
and
\[
 |\unu(\qnu')-\unu(\pnu')|\geq c_0\epsilon_0,
\]
 where $0<c_0<1$ and $\unu(\qnu'),\ \unu(\pnu')$ locate in the same $\Delta_i^k$ for some $k$.

Since for $j'\ne i,\ \Lambda_{j'}(\pq)$ is arbitrary small when $\nu$ is large enough and the strength of fronts belonging to $i$-th family is small, one has
\[
 \max_{H_\nu\in \overline{P'_\nu Q'_\nu}}\min_{s^*\geq 0}|u_\nu(H_\nu)-R_i[u_\nu(P'_\nu)](s^*)|\ll 1.
\]
Which means the values of each $u_\nu$ along the segment $\overline{P'_\nu Q'_\nu}$ remain arbitrary close to the $i$-rarefaction curve through $u^L$. Then by the analogous argument in the proof of Lemma \ref{l:int of two subcurves}, one gets the contradiction $\muic(\{P\})>0$.
%\end{enumerate}
Therefore, we conclude that \eqref{e:left_right_lim} is true. And \eqref{e:rightlim} is similar to prove.

{\bf Step 2.} Define $\T=\bigcup_i \T_i$. If $P\notin \Theta\cap \mathrm{Graph}(\T)$ and if $u$ is not continuous at $P$, then there exist $\epsilon>0$ and $\pnu,\ Q_\nu\rightarrow P$ such that $\pq$ is space like and
\begin{equation*}
 \unu(\pnu)\rightarrow u(P), \qquad |\unu(\qnu)-u(P)|\geq \epsilon \text{ for all }\nu.
\end{equation*}
Up to subsequence, we consider th following two cases.
\begin{itemize}
 \item[1)] there exists $j\ne j'$, such that $\min\{\Lambda_j(\pq),\ \Lambda_{j'}(\pq)\} \geq \epsilon_0$. This situation can be ruled out by the argument in Case 1 of Step 1.
 \item[2)]  For some $j\in\{1,\cdots,j\}$ and all $\nu$, $\Lambda_j(\pq) \geq\epsilon_0$ and for all $ j'\ne j,\ \Lambda_{j'}(\pq)\rightarrow 0$ as $\nu\rightarrow \infty$. Then one can use the argument in Case 3 of Step 1 to obtain the contradiction.
\end{itemize}
Therefore we get the continuity of $u$ outside $\mathrm{Graph}(\T)\bigcup \Theta$.

Then, we have proved the first part of the theorem.

{\bf Step 3.} We now establish the Rankine-Hugoniot condition \eqref{d:R-H condition} for curves in $\T$. Let $P=(t_0,x_0)\in Graph(\T)\setminus\Theta$, and write
\begin{equation*}
\ul=\lim_{x\rightarrow x_0-} u(x,t_0),\qquad \ur=\lim_{x\rightarrow x_0+} u(x,t_0).
\end{equation*}
We consider two cases.

{\bf Case 1.} There is only one curve $y \in \T_i$ passing through point $P$. From \eqref{e:left_right_lim}, we know that the discontinuity $[\ul,\ur]$ must be simple. Suppose that $\T_{i,\nu} \ni y_\nu \rightarrow y$ as $\nu\rightarrow \infty.$ By \eqref{R_H} and \eqref{front speed error}, we obtain
\begin{equation*}
\begin{split}
 \sigma_i(\ul_\nu,\ur_\nu)[\ul_\nu-\ur_\nu]=f(\ulnu)-f(\urnu),\\
|\dot{y}_\nu-\sigma_i(\ulnu,\urnu)|<2\epsilon_\nu,
\end{split}
\end{equation*}
where
\begin{equation*}
\ul_\nu=\lim_{x\rightarrow y_\nu(t_\nu)-} u(t_\nu, x),\qquad \ur_\nu=\lim_{x\rightarrow y_\nu(t_\nu)+} u(t_\nu, x), \quad t_\nu\rightarrow t_0 \ \text{as}\ \nu\rightarrow \infty.
\end{equation*}

Then by \eqref{e:left_right_lim}, one has for every $\epsilon>0$ there exists $\bar{\nu}(\epsilon)$ such that $\forall \nu>\bar{\nu}$, one has
\begin{equation*}
 |y_\nu-\sigma_i(\ul,\ur)|\leq |y_\nu-\sigma_i(\ulnu,\urnu)|+|\sigma_i(\ul,\ur)-\sigma_i(\ulnu,\urnu)|<\epsilon.
\end{equation*}
From \eqref{subdis.-conv.} and the fact that the $i$-waves of Riemann problem constructed by Theorem \ref{t:ec} are Liu admissible, one deduces that
\[
 \dot{y}(t_0)(u^R-u^L)=f(u^R)-f(u^L),
\]
and
\begin{equation*}
 \dot{y}(t_0)\leq \hat \sigma_i(S_i[\ul](\tau),\ul),\ \forall \tau\in [0,s].
\end{equation*}

{\bf Case 2.} If the discontinuity $[\ul,\ur]$ is a composition of $[u_0,u_1],\ [u_1,u_2],\cdots,[u_l,u_{l+1}]$ where $u_0=\ul,u_{l+1}=\ur$ , $u_j=T_i[\ul](s_j)$ and $\ul\in\Delta^{k^*_1}_i,\ur\in\Delta^{k^*_2}_i$. Let
\begin{align*}
  k_1&=\min\{k\ \text{is even},\ k\geq k^*_1\},\\
 k_{p}&=\max\{k\ \text{is even},\ k\leq k^*_2\}
 \end{align*}
and $k_p=k_1+2(p-1)$. One has $p\geq l+1$.

According to \eqref{e:left_right_lim}, there exist $P_\nu,Q_\nu\rightarrow P$ such that
\[
 u_\nu(P_\nu)\rightarrow \ul,\ u_\nu(Q_\nu)\rightarrow \ur,
\]
and the segment $\pq$ is space-like.

  We claim that there exist $p$ subdiscontinuity curves $y_1,\cdots,y_{p} \in \T_i$ passing through point $P$, where $y_j\in\T^{k_1+2j}_i,\ j=1,\cdots,p-1.$

In fact, let $\S^k_i(\pq)$ denote the maximal $(i,k)$-substrength of all fronts across $\pq$. It is sufficient to show that, up to a subsequence, there is a constant $C>0$ such that
\[
 \S^k_i(\pq)\geq C,\ \forall k\in\{k_1,\cdots,k_{p}\}.
\]

If not, there exists $k_0\in \{k_1,\cdots,k_{p}\}$ such that $\S^{k_0}_i(\pq)\rightarrow 0$ as $\nu\rightarrow \infty$. Since $\mu^{IC}(P)=0$, we have
\[
 \Lambda_j(\pq)\rightarrow 0,\ \text{as}\  \nu\rightarrow \infty,\  \forall j\ne i.
\]
By Lemma \ref{l:int of two subcurves}, we conclude that $\S^k_i(\pq)\rightarrow 0,\ \text{as}\  \nu\rightarrow \infty,\ \forall k$ odd. Since the $(i,k_0)$-substrength of all wave fronts are arbitrary small, up to a subsequence, one can always find points $P'_\nu,Q'_\nu$ on $\pq$ such that $u_\nu(P'_\nu),\ u_\nu(Q'_\nu)\in \Delta^{k_0}_i$ and all fronts are either admissible discontinuity with left and right value inside $\Delta^{k_0}_i$ or rarefaction fronts. Therefore by the analogous argument in Case 3 of Step 1, there exist a constant $c>0$ independent on $\nu$ such that in a small neighborhood $\Gamma_\nu$ of $P$, one has $\mu^{IC}_\nu(\Gamma_\nu)\geq c$. This contradicts with the assumption $\mu^{IC}(P)=0$. This concludes our claim.

Moreover, by Lemma \ref{l:int of two subcurves} and the equalities \eqref{e:left_right_lim}, there are exactly $p$ subdiscontinuity curves passing through $P$, otherwise $P$ must be an interaction point with $\mu^{IC}(P)>0$.

Suppose $u_{n+1}=T_i[u_n](s_n)$ for some $s_n>0$, $n\in\{0,\cdots,l\}$ and $T_i[u_n](\cdot)$ intersects $Z_i^{k_{n_1}},\cdots,Z_i^{k_{n_q}}$ at $u_{n,1},\cdots,u_{n,q}$. Then the subdiscontinuity curves with substrength $s^{k_{n_1}}_i,\cdots,s^{k_{n_q}}_i$ must conincide in neighborhood of time $\tau$.

If in a neigborhood of $P$, $y_j$ and $y_{j+1}$ are not identical, by the similar argument for proving \eqref{e:left_right_lim}, one can show that \eqref{e:lim inside} is true.

Suppose $\T_{i,\nu} \ni y_{j,\nu}\rightarrow y_{j},\ j\in \{1,\cdots,p\}$. As we discussed above, it is not restrictive to assume that $p=l+1$ and there is a neighborhood $U(t_0)$ of $t_0$, such that $\forall t\in U(t_0)$,
\[
 y_{1,\nu}(t)<\cdots<y_{p,\nu}.
\]

Then, using the similar argument in Step 1, besides \eqref{e:left_right_lim} and \eqref{e:rightlim}, one can also show that

\begin{equation*}
 \begin{split}
 & \lim_{r \rightarrow 0+} \limsup_{\nu \rightarrow \infty} \left( \sup_{\genfrac{}{}{0pt}{}{x<y_{m,\nu}^{k_m}(t)}{(x,t)\in B(P,r)}} \big| u_\nu(x,t) - u_{m-1} \big| \right) = 0 \ m=2,\cdots, p,\\
&\lim_{r \rightarrow 0+} \limsup_{\nu \rightarrow \infty} \left( \sup_{\genfrac{}{}{0pt}{}{x>y_{n,\nu}^{k_n}(t)}{(x,t)\in B(P,r)}} \big| u_\nu(x,t) - u_{n} \big| \right) = 0,\ n=1,\cdots,l.
 \end{split}
\end{equation*}

For notational convenience, we write $u_0=u^L,\ u_{p}=u^R$. Therefore, by the same argument in Case 1, we obtain that for $n=1,\cdots,p$
\[
 \dot{y}_n(t_0)(u_n-u_{n-1})=f(u_n)-f(u_{n-1})
\]
and
\[
  \dot{y}_n(t_0)\leq \hat \sigma_i(u_n,S_i[\ul](\tau)),\ \forall \tau\in [s_{j-1},s_{j}].
\]

Adding them together, one finally obtain for $m=1,\cdots,p$
\[
  \dot{y}_m(t_0)(u^R-u^L)=f(u^R)-f(u^L)
\]
and
\begin{equation*}
 \dot{y}_m(t_0)\leq \hat \sigma_i(\ul,S_i[\ul](\tau)),\quad \forall \tau\in [0,s].
\end{equation*}

\end{proof}

The proof of Theorem \ref{t:main theorem} already implies the results of Theorem \ref{t:approx_shock_conv}.

Concerning a sequence of exact solution of \eqref{basic equation} such that $u_\nu\rightarrow u$ in $L^1_{loc}$, one can approximate each $u_\nu$ by a sequence of wave-front tracking approximations $u_{m,\nu}\rightarrow u_\nu$ and by a suitable diagonal sequence $u_{\nu,m(\nu)}\to u$ one has the following corollary.
\begin{corollary}[stability of discontinuity curves]
 Considering a sequence of exact solutions $u_\nu$ such that $u_\nu\rightarrow u$ in $L^1_{loc}$, one has
\begin{enumerate}
 \item Let $y_\nu:[t^-_\nu,t^+_\nu]\mapsto \R$ be a discontinuity curves of $u_\nu$ described in Theorem \ref{t:main theorem}. Assume $t^-_\nu\to t^-,t^+_\nu\to t^+$ and $y_\nu(t)\to y(t)$ for each $t\in[t^-,t^+]$. Then $y(\cdot)$ is a discontinuity curve of the limiting solution $u$ with the properties mentioned in Theorem \ref{t:main theorem}.
\item Viceversa, let $y:[t^-,t+]\mapsto \R$ be a discontinuity curve of $u$ for a.e. $t\in[t^-,t^+]$. Then there exists a sequence of discontinuity curves $y_\nu:[t^-_\nu,t^+_\nu]\mapsto \R$ of $u_\nu$ such that
\[
 t^-_\nu\to t^-,\ t^+_\nu\to t^+,\quad \lim_{\nu\to \infty}y_\nu(t)=y(t),
\]
for a.e. $t\in[t^-,t^+]$.
\end{enumerate}

\end{corollary}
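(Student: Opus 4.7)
The plan is to reduce both directions to the already-established wave-front tracking case via a diagonal extraction, invoking Theorems \ref{t:main theorem} and \ref{t:approx_shock_conv} at two nested levels. For each $\nu$, I fix a sequence of wave-front tracking approximations $\{u_{m,\nu}\}_m$ of $u_\nu$ with $u_{m,\nu}\to u_\nu$ in $L^1_{\mathrm{loc}}$ as $m\to\infty$. Since $u_\nu\to u$ in $L^1_{\mathrm{loc}}$, a diagonal choice $m=m(\nu)\to\infty$ yields a hybrid sequence $\tilde u_\nu := u_{m(\nu),\nu}$ of wave-front tracking approximations with $\tilde u_\nu\to u$ in $L^1_{\mathrm{loc}}$. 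Both $\tilde u_\nu\to u$ and, for each fixed $\nu$, the inner sequence $u_{m,\nu}\to u_\nu$ are therefore sequences of wave-front approximations of an exact Liu-admissible solution, eligible for Theorem \ref{t:approx_shock_conv}.

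For part (1), I would first apply Theorem \ref{t:approx_shock_conv} to each inner sequence $u_{m,\nu}\to u_\nu$ with the prescribed discontinuity curve $y_\nu$: up to a subsequence in $m$, this furnishes discontinuity curves $y_{m,\nu}$ of $u_{m,\nu}$ of uniformly (in $m$) large strength converging pointwise to $y_\nu$, with uniform convergence of the one-sided traces. Refining $m(\nu)$ once more, I ensure the diagonal curves $y_{m(\nu),\nu}$ converge pointwise to $y$ and that their strengths remain bounded below uniformly in $\nu$. The latter is true because the jumps across $y_{m(\nu),\nu}$ converge to the jump of $u$ across $y$, and this jump is nontrivial since each $y_\nu$ is a genuine discontinuity of $u_\nu$ and $u_\nu\to u$. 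At this stage the diagonal curves form a sequence of discontinuity curves of the wave-front approximations $\tilde u_\nu$ of $u$ of uniformly large strength, converging to $y$; the subdiscontinuity construction of Section \ref{s:sdc} then identifies $y$ as an element of $\T$, and Theorem \ref{t:main theorem} applied to $u$ delivers the Rankine-Hugoniot relation, Liu admissibility, and the existence of left and right traces.

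For part (2), I would apply Theorem \ref{t:approx_shock_conv} directly to $\tilde u_\nu\to u$ and the given $y\in\T$: this produces discontinuity curves $\tilde y_\nu$ of $\tilde u_\nu = u_{m(\nu),\nu}$ of uniformly large strength converging to $y$. For each fixed $\nu$, $\tilde y_\nu$ is then a discontinuity curve of the wave-front approximation $u_{m(\nu),\nu}$ of $u_\nu$. I would interpret $\tilde y_\nu$ as the $m=m(\nu)$ member of an $(\epsilon,i,k)$-approximate subdiscontinuity curve in the inner sequence $\{u_{m,\nu}\}_m$ (for the $(i,k)$ dictated by the type of the jump $[\ul,\ur]$), and extend it for larger $m$ by the selection algorithm of Section \ref{s:sdc}; the resulting sequence of approximate subdiscontinuity curves converges as $m\to\infty$ to a Lipschitz discontinuity curve $y_\nu$ of $u_\nu$ inheriting the uniform strength lower bound. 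A final diagonal refinement ensures $\|y_\nu - \tilde y_\nu\|_{L^\infty}\to 0$, hence $y_\nu\to y$ pointwise on $[t^-,t^+]$ and $t^\pm_\nu\to t^\pm$.

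The main technical difficulty is bookkeeping the two nested approximations so that the uniform strength lower bounds survive the diagonal: without this uniformity, Theorem \ref{t:approx_shock_conv} cannot be invoked at the hybrid level and the selected curves would collapse to trivial objects, or would be confused with nearby discontinuities in the composite-wave case. All the requisite control, however, is already provided by the interaction-cancellation measures $\mu^\mathrm{IC}_\nu$ exploited in Section \ref{s:sdc}, so once the diagonal is chosen with care no estimate beyond those of the preceding sections is needed.
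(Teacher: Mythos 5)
Your proposal is correct and follows essentially the same route as the paper, which justifies the corollary in a single sentence by approximating each exact solution $u_\nu$ with wave-front tracking solutions $u_{m,\nu}$ and passing to a suitable diagonal sequence $u_{m(\nu),\nu}\to u$ so that Theorems \ref{t:main theorem} and \ref{t:approx_shock_conv} apply at both levels. Your write-up merely makes explicit the bookkeeping (uniform lower bounds on front strengths through the diagonal) that the paper leaves implicit.
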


\section{An remark on general strict hyperbolic systems}
\label{s:example}

We construct a strict hyperbolic  system of conservation laws with one characteristic family which is not linearly degenerate or piecewise genuinely nonlinear. Therefore neither the assumption of Theorem \ref{t:main theorem} or that of Theorem 10.4 in \cite{Bre} holds. We show that the set of jump points of its admissible solution to some initial data can not be ``exactly'' covered by countably many Lipschitz continuous curves.

Consider the following $2\times 2$ system
\begin{equation}
\label{e:example}
\begin{cases}
  u_t+f(u,v)_x=0,\\
v_t-v_x=0.
\end{cases}
\end{equation}
where $f$ is a smooth function and $u,v$ is the unknown variables.
The Jacobian matrix of flux function is

\[
 DF(u,v) =
 \begin{pmatrix}
  f_u & f_v \\
 0 & -1
 \end{pmatrix}.
\]
Then the eigenvalues are
\[
 \lambda_1=-1,\quad \lambda_2=f_u.
\]
And the corresponding right eigenvectors are
\[
 r_1(u,v)=(f_v,-f_u-1)^T,\quad r_2=(1,0)^T.
\]

The system is strict hyperbolic if $f_u>-1$. (In fact, $f$ constructed latter satisfies this property). Obviously, one has
\[
Z_1=\{(u,v):\nabla \lambda_1\cdot r_1(u,v)=0\}=\R^2,
\]
which means that the 1-th characteristic family is linearly degenerate.

Latter we will also show that
\begin{equation}\label{e:inflection_manifold_2}
 Z_2=\{(u,v):\nabla \lambda_2\cdot r_2\}=\{(u,v):f_{uu}(u,v)=0\}=\{v=0\}.
\end{equation}
This yields that the vector field $r_2$ is tangent to the manifold $Z_2$, therefore the second characteristic family is not piecewise genuinely nonlinear or linearly degenerate..

Define $f(u,v)=e^{-1/v}u^2/{2}$ when $v>0$ and $f(u,0)\equiv 0$. In the following, we define the value of $f$ for $v<0$.

Let the initial data to be
\begin{equation}
 u_0(x)=\begin{cases}
         u_l & x<0,\\
         u_r & x>0,
        \end{cases}
\qquad v_0(x)=\begin{cases}
        -a & x<h,\\
       a & x>h,\\
          \end{cases}
\end{equation}
for some small constants $u_l>u_r$ and $a,h>0$.

Since the second equation in \eqref{e:example} is a linear transport equation, one has
\begin{equation}
v(x,t)=\begin{cases}
        -a & x+t<h,\\
       a & x+t>h.
           \end{cases}
\end{equation}
Then one can solve the system \eqref{e:example} by regarding it as a scalar conservation laws of $u$
\[
  u_t+f(u,v)_x=0
\]
with discontinuous coefficient when $v$ is a piecewise constant function.

If w.r.t. $u$, $f$ is concave for some small fixed $v<0$ and concave for some fixed $v>0$, then $u$ is a center rarefaction wave in the area $\{x+t<h\}$. Immediately after  the rarefaction wave crosses the characteristic line $x+t=h$, it turns out to be a compressive wave since $f(u,-a)$ is a convex function on $u$. It may generate a shock after a short time.

Indeed, this can be done in the following way.

Consider a center rarefaction wave of $u$ in the area $\{x+t<h\}$
\begin{equation}
 u(x,t)=\begin{cases}
  u_l & x/t< f_u(u_l,-a),\\
  g(x/t) & x/t= f_u(g(x/t),-a),\\
  u_r & x/t> f_u(u_r,-a),
 \end{cases}
\end{equation}
where $g(\cdot)$ is the converse function of $f_u(\cdot,-a)$.

Let $u^-= g(\xi^*)$ for some $\xi^*$. The value of $u$ will be $u^-$ along the characteristic line
\[
 x=t f_u(u^-,-a)
\]
until it intersects another characteristic line $x+t=h$.

Solving the equations
\begin{equation}
 \begin{cases}
  x=t f_u(u^-,-a),\\
  x=-t+h,
 \end{cases}
\end{equation}
we get the intersection point of two characteristics:
\begin{equation}
 \begin{cases}
  x_0=h f_u(u^-,-a)/[1+f_u(u^-,-a)],\\
  t_0={h}/[1+f_u(u^-,-a)].
 \end{cases}
\end{equation}

Next, we compute the value of $u$ after the characteristic cross the line ${x+t=h}$. For each point $(x_0,t_0)$ on the line $x+t=h$, let
\[
 u^+(x_0,t_0)=\lim_{\substack{x+t>h\\ (x,t)\to(x_0,t_0)}}u(x,t).
\]
By Rankine-Hugoniot condition, one has
\[
 -(u^+-u^-)=\frac{e^{-1/a}(u^+)^2}{2}-f(u^-,-a).
\]
This yields
\begin{equation}
\label{e:value_u+}
 u^+=\frac{-1+\sqrt{1+2e^{-1/a}(f(u^-,-a)+u^-)}}{e^{-1/a}}.
\end{equation}

Since $f(u,v)={e^{-1/a}u^2}/{2}$ on area $\{h<x+t<2h\}$ , the characteristic line of the equation
\[
 u_t+f(u,a)_x=0
\]
starting at $(x_0,t_0)$ is
\[
x-\frac{h f_u(u^-,-a)}{1+f_u(u^-,-a)}=e^{-1/a} u^+ (t-\frac{h}{1+f_u(u^-,-a)}).
\]

We require that it passes through the point $(0,2h)$, that is
\[
 -\frac{h f_u(u^-,-a)}{1+f_u(u^-,-a)}=e^{-1/a} u^+ (2h-\frac{h}{1+f_u(u^-,-a)}),
\]
which yields
\begin{equation} \label{e:value_fu}
 f_u(u^-,-a)=\frac{-e^{-1/a}u^+}{2e^{-1/a }u^+ +1}.
\end{equation}

Substitute \eqref{e:value_u+} into \eqref{e:value_fu}, we get
\begin{equation*}
 f_u(u^-,-a)=\frac{1-g(u^-,-a)}{2g(u^-,-a)-1},
\end{equation*}
where $g(u^-,-a)=\sqrt{1+2e^{-1/a}(f(u^-,-a)+u^-)}.$

Now, we consider the Cauchy problem of an ODE with parameter $a$
\begin{equation}
\label{e:ode}
 \begin{cases}
  \frac{d}{du}F(u,a)=\frac{1-G(F,u,a)}{2G(Fu,a)-1},\\
  F(0,a)=0,
 \end{cases}
\end{equation}
where $G(F,u,a)=\sqrt{1+2e^{-1/a}(F+u)}$.

By the theory of the ODE, since $G$ is smooth when $(u,a,F)$ lies in a small neighborhood of the origin, there is a unique smooth solution $F$ defined on some interval $[-b,b]\ (b>0)$, smoothly depending on the parameter $a$.

Therefore we defined $f$ for $v<0$ small as
\[
 f(u,v)=F(u,v).
\]

By our construction, $f(u,v)$ is concave about $u$ for a negative fixed $v$. In fact, from \eqref{e:ode}, one finds
\[
 f_{uu}=\frac{-g_u}{2 g+1}<0,
\]
since $g_u=\frac{2e^{-1/v}}{2h}>0$. This concludes that $f$ is concave with respect to $u$.

It is not difficult to verify that all partial derivatives of $f$ is continuous on $\{v=0\}$.  As $f$ is smooth on $\{v\ne 0\}$, it concludes that $f$ is smooth and independent of $h$.

\begin{figure}[htbp]
\begin{center}

\begin{picture}(0,0)%
\includegraphics{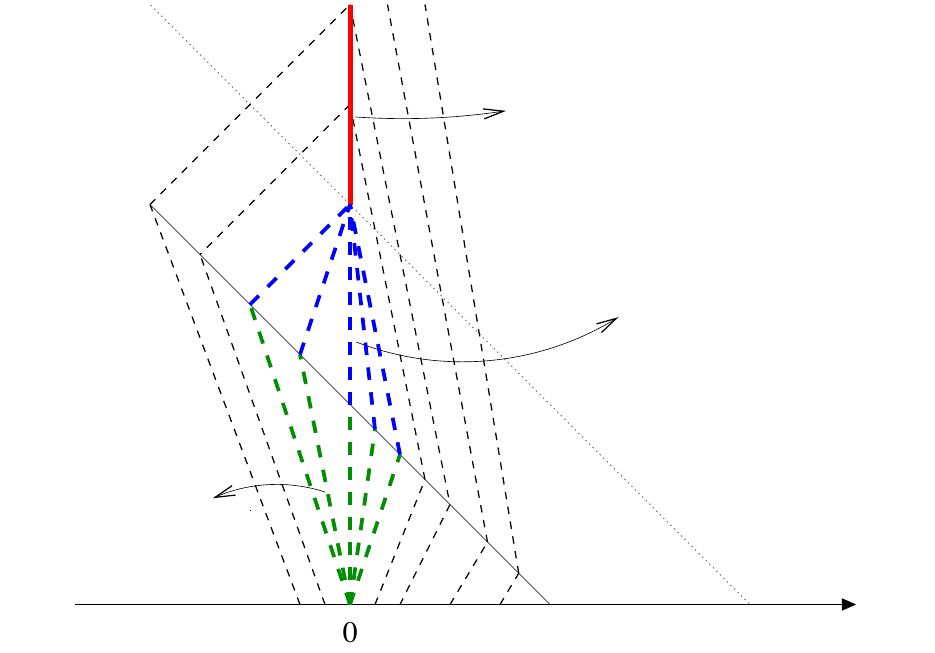}%
\end{picture}%
\setlength{\unitlength}{1579sp}%
\begingroup\makeatletter\ifx\SetFigFont\undefined%
\gdef\SetFigFont#1#2#3#4#5{%
  \reset@font\fontsize{#1}{#2pt}%
  \fontfamily{#3}\fontseries{#4}\fontshape{#5}%
  \selectfont}%
\fi\endgroup%
\begin{picture}(11203,7794)(-599,-6901)
\put(8251,-6811){\makebox(0,0)[lb]{\smash{{\SetFigFont{10}{12.0}{\familydefault}{\mddefault}{\updefault}{\color[rgb]{0,0,0}$2h$}%
}}}}
\put(5851,-6811){\makebox(0,0)[lb]{\smash{{\SetFigFont{10}{12.0}{\familydefault}{\mddefault}{\updefault}{\color[rgb]{0,0,0}$h$}%
}}}}
\put(10051,-6511){\makebox(0,0)[lb]{\smash{{\SetFigFont{10}{12.0}{\familydefault}{\mddefault}{\updefault}{\color[rgb]{0,0,0}$x$}%
}}}}
\put(5701,-511){\makebox(0,0)[lb]{\smash{{\SetFigFont{10}{12.0}{\familydefault}{\mddefault}{\updefault}{\color[rgb]{0,0,0}shock}%
}}}}
\put(6451,-2686){\makebox(0,0)[lb]{\smash{{\SetFigFont{10}{12.0}{\familydefault}{\mddefault}{\updefault}{\color[rgb]{0,0,0}compressive wave}%
}}}}
\put(-599,-5536){\makebox(0,0)[lb]{\smash{{\SetFigFont{10}{12.0}{\familydefault}{\mddefault}{\updefault}{\color[rgb]{0,0,0}rarefaction wave}%
}}}}
\end{picture}%

\caption{}
\label{f:construction_f}
\end{center}
\end{figure}

By the above construction, as shown in Figure \ref{f:construction_f}. The rarefaction wave turns out to be compressive in the area $\{h<x+t<2h\}$ which eventually generates a shock starting at the point $(0,2h)$ and propagating along the $x=0$. Notice that there is another shock starting from the point $(h,0)$. However, we can modify the initial data a little to get rid of this shock. In fact, recalling  the formula \eqref{e:value_u+} and letting
\[
 u_1=\frac{-1+\sqrt{1+2e^{-1/a}(f(u_r,-a)+u_r)}}{e^{-1/a}}.
\]
 We can replace $u_0$ in the initial data by
\[
 \tilde{u}_0=\begin{cases}
              u_l  & x<0,\\
              u_r  & 0<x<h,\\
              u_1  & x>h
             \end{cases}
\]
to make the speed of the characteristics starting from $\{h<x<2h\}$  the same as nearby. By the total variation estimates for the general system
\[
 {\rm Tot.Var.}\{u(\cdot,t)\}\lesssim {\rm Tot.Var.}\{u_0(\cdot)\},
\]
it is not restrict to assume that the total variation of $\tilde{u}_0$ is sufficiently small.

Now, we begin to find the initial data with which \eqref{e:example} had an admissible solution containing ``
Cantor set'' shocks.

Now, we  consider the initial data
\begin{equation}\label{e:initial_example}
 u_0(x)=\begin{cases}
         u_l & x<0,\\
         u_r & x>0,
        \end{cases}
\qquad v_0(x)=\begin{cases}
        0 & x<2h,\\
        -a &2h< x<3h,\\
       a & 3h<x<4h,\\
        0 & x>4h.
       \end{cases}
\end{equation}

Since the second equation in \eqref{e:example} is a linear transport equation, one has
\begin{equation}
v(x,t)=\begin{cases}
              0 & x+t<2h,\\
        -a &2h< x+t<3h,\\
       a & 3h<x+t<4h,\\
        0 & x+t>4h.
           \end{cases}
\end{equation}

In the area $\{x+t<2h\}$, the first equation in \eqref{e:example} becomes $u_t=0$, then one has
\begin{equation}
 u(x,t)=\begin{cases}
  u_l & x<\min\{2h-t,0\},\\
   u_r & x>\max\{2h-t,0\}
    \end{cases}
\end{equation}

Next, we compute the value of $u$ in $\{2h<x+t<3h\}$. For any $(x',t')\ne(0,2h)$ on the line $x+t=2h$, write
\[
 u^-=\lim_{\substack{(x,t)\to(x',t')\\ x+t<2h}} u(x,t),\qquad u^+=\lim_{\substack{(x,t)\to(x',t')\\ x+t>2h}} u(x,t).
\]
By Rankine-Hugoniot conditions, one has
\[
 f(u^+,-a)-f(u^-,0)=-(u^--u^-).
\]
which yields
\[
 u^-=u^++f(u^+,-a).
\]
Regarding $u^+$ as a function of $u^-$ and differentiating the above equation on both sides with respect to $u^-$, one gets
\[
 (f_u+1) (u^+)'=1.
\]
By \eqref{e:value_fu}, one has
\[
 (u^+)'=\frac{1}{1+f_u}=\frac{2g-1}{g}>0
\]
in a small neighborhood of origin. Thus $u^+$ is strictly increasing w.r.t. $u^-$. This concludes that at point $(0,2h)$ the left value of $u$ is still larger than the right value of $u$, i.e.
\[
 u^+_l>u^+_r,
\]
where
\[
 u^+_l=\lim_{\substack{(x,t)\to(0,2h)\\ 2h-t<x<0}} u(x,t),\qquad u^+_r=\lim_{\substack{(x,t)\to(0,2h)\\ x>\max\{2h-t,0\}}} u(x,t).
\]
As we discussed before, one gets a rarefaction wave by solving the Riemann problem in $\{2h<x+t<3h\}$ and a compressive waves in area $\{3h<x+t<4h\}$.

Next, we compute the value of $u$ in the zone $\{x+t>4h\}$. For any point $(x_0,t_0)\ne(0,4h)$ on the line $x+t=4h$, let
\[
 u^+(x_0,t_0)=\lim_{\substack{x+t>4h\\ (x,t)\to(x_0,t_0)}}u(x,t),\quad u^-(x_0,t_0)=\lim_{\substack{x+t<4h\\ (x,t)\to(x_0,t_0)}}u(x,t).
\]

By Rankine-Hugoniot conditions, one has
\[
 f(u^+,0)-f(u^-,a)=-(u^+-u^-).
\]
Then since $f(u^+,0)=0$ and $f(u^-,a)={e^{-1/a}(u^-)^2}/{2}$, one obtains
\[
 u^+=\frac{e^{-1/a}(u^-)^2}{2}+u^-.
\]

We claim that $u^+_L>u^+_R$, where
\[
 u^+_L=\lim_{\substack{(x,t)\to(4h,0)\\ 4h-t<x<0}} u(x,t),\qquad u^+_R=\lim_{\substack{(x,t)\to(4h,0)\\ x>\max\{4h-t,0\}}} u(x,t).
\]
are the left/right of $u$ across the line $x+t=4h$
In fact, since $u^-_L>u^-_R$, where
\[
u^-_L=\lim_{\substack{(x,t)\to(4h,0)\\ x+t<4h\\ x<\hat \lambda (t-4h) }} u(x,t),\qquad u^-_R=\lim_{\substack{(x,t)\to(4h,0)\\ x+t<4h\\ x>-\hat \lambda (t-4h)}} u(x,t).
\]
one has
\begin{align*}
 & u^+_L-u^+_R\\
=&\left(\frac{e^{-1/a}(u^-_L)^2}{2}+u^-_L\right)-\left(\frac{e^{-1/a}(u^-_R)^2}{2}+u^-_R\right)\\
=& (u^-_L-u^-_R)\left(1-\frac{e^{-1/a}(u^-_L)^2}{2}(u^-_L+u^-_R)\right)>0
\end{align*}
since $a$ and $(u^-_L,u^-_R)$ sufficiently small.

Therefore as the first equation in \eqref{e:example} is $u_t=0$ in $\{x+t>4h\}$, the jump prorogate along $x=0$ which turns out to be a shock. Similarly, by modifying the initial data a little, we can guarantee that there is not other shocks in the solution. (See Figure \ref{f:1_example}.)

\begin{figure}[htbp]
\begin{center}

\begin{picture}(0,0)%
\includegraphics{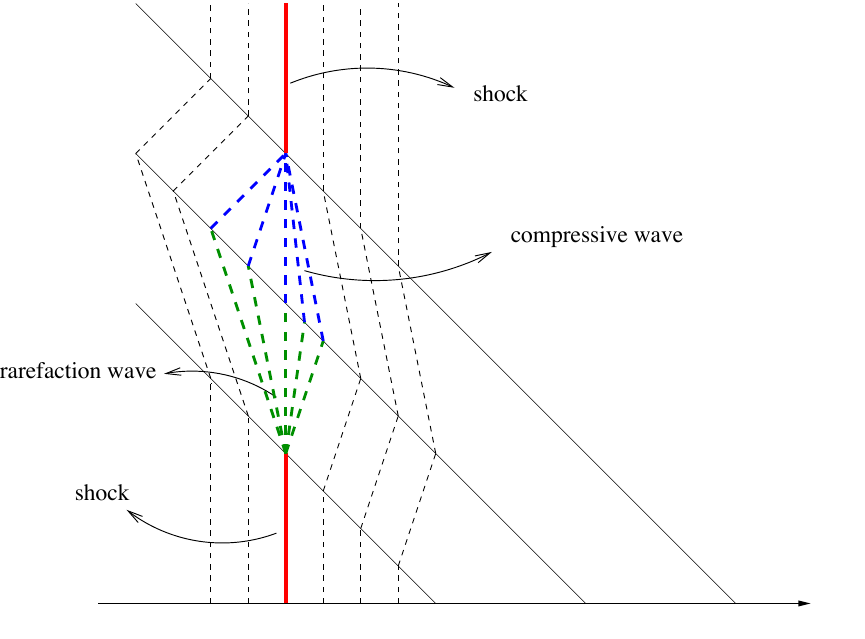}%
\end{picture}%
\setlength{\unitlength}{1184sp}%
\begingroup\makeatletter\ifx\SetFigFont\undefined%
\gdef\SetFigFont#1#2#3#4#5{%
  \reset@font\fontsize{#1}{#2pt}%
  \fontfamily{#3}\fontseries{#4}\fontshape{#5}%
  \selectfont}%
\fi\endgroup%
\begin{picture}(13799,10194)(-974,-9301)
\put(10726,-9211){\makebox(0,0)[lb]{\smash{{\SetFigFont{7}{8.4}{\familydefault}{\mddefault}{\updefault}{\color[rgb]{0,0,0}$6h$}%
}}}}
\put(8251,-9211){\makebox(0,0)[lb]{\smash{{\SetFigFont{7}{8.4}{\familydefault}{\mddefault}{\updefault}{\color[rgb]{0,0,0}$4h$}%
}}}}
\put(3451,-9211){\makebox(0,0)[lb]{\smash{{\SetFigFont{7}{8.4}{\familydefault}{\mddefault}{\updefault}{\color[rgb]{0,0,0}0}%
}}}}
\put(5851,-9211){\makebox(0,0)[lb]{\smash{{\SetFigFont{7}{8.4}{\familydefault}{\mddefault}{\updefault}{\color[rgb]{0,0,0}$2h$}%
}}}}
\put(12226,-8911){\makebox(0,0)[lb]{\smash{{\SetFigFont{7}{8.4}{\familydefault}{\mddefault}{\updefault}{\color[rgb]{0,0,0}$x$}%
}}}}
\end{picture}%

\caption{Afer modifying a little the initial data \eqref{e:initial_example}, the admissible solution contains two shocks along the $t$-axis.}
\label{f:1_example}
\end{center}
\end{figure}

Next, we construct an admissible solution to such that it consists of graph of shock curves which is an 1-dimensional Cantor
set. Let
\[
 B^-_{m,n}=\left(\frac{3n+1}{3^m},\frac{3n+3/2}{3^m}\right)\cdot 6,\qquad B^+_{m,n}=\left(\frac{3n+3/2}{3^m},\frac{3n+2}{3^m}\right)\cdot 6, \quad n=0,1,\cdots,3^{m-1}-1.
\]
and
\[
 B_{m,n}=B^-_{m,n}\cup B^+_{m,n}\ \text{ and } B_m=\bigcup^{3^{m-1}-1}_{n=0}  B_{m,n}.
\]

We consider initial datum as
\begin{equation*}
 u_{0,m}(x)=\begin{cases}
         u_l & x<0,\\
         u_r & x>0,
        \end{cases}
\qquad v_{0,m} =\begin{cases}
         a_n & x\in B^-_{m,n},\\
        -a_n & x\in B^+_{m,n},\\
         0 & x\in\R\setminus B_{m,n},
        \end{cases}
\end{equation*}
where one can always choose $u_l,\ u_r$ and a suitable sequence $\{a_n\}$ such that total variation of $(u_0.v_0)$ is sufficiently small. By modifying $u_0$ properly similarly to get rid of extra shocks. One finds that the admissible solution $(u_m,v_m)$ of the system \eqref{e:example} is continuous inside $\hat{K}_m$ where
\[
 \hat{K}_m:=\bigcup^\infty_{m=1}\bigcup^{3^{m-1}-1}_{n=0} \{(x,t)\in \R^+\times\R:\ \frac{6(3n+1)}{3^m}-x<t<\frac{6(3n+2)}{3^m}-x  \}
\]
and its shocks belongs to 1-th family are located on
\[
\Big{\{}(0,t):t\in[0,\infty)\Big{\}}\setminus\bigcup^{3^{m-1}-1}_{k=0} \left\{(0,t):\  \frac{6(3k+1)}{3^m}<t<\frac{6(3k+2)}{3^m}\right\}.
\]

Obviously, $(u,v):=\lim_{m\rightarrow \infty} (u_m,v_m)$ is the admissible solution of \eqref{e:example} with initial data $(u_0,v_0):=\lim (u_{0,m},v_{0,m})$. As $m\rightarrow \infty$, $C_m:=[0,6]\setminus B_m$ converge to the Cantor set C (scaled by 6). Since the Cantor set is uncountable set and can not contain any interval of non-zero length, it is impossible to find countable many Lipschitz continuous curves which exactly cover discontinuities of $(u,v)$, that is, on these curves, either $(u,v)$ is discontinuous or it is an interaction point.This means that Theorem \ref{t:main theorem} fails for this situation.

\end{document}